\theoremstyle{plain}
\newtheorem{theorem}{Theorem}
\newtheorem{lemma}{Lemma}
\newtheorem{corollary}{Corollary}
\newtheorem{proposition}{Proposition}
\theoremstyle{definition}
\theoremstyle{remark}
\newtheorem{remark}{Remark}
\begin{document}
\title
[The extension of holomorphic functions on a non-pluriharmonic locus]
{The extension of holomorphic functions on a non-pluriharmonic locus}
\author{Yusaku Tiba}
\date{}

\begin{abstract}
Let $n \geq 4$ and let $\Omega$ be a bounded hyperconvex domain in $\mathbb{C}^{n}$.  
Let $\varphi$ be a negative exhaustive smooth plurisubharmonic function on $\Omega$.  
We show that any holomorphic function defined on a connected open neighborhood of the support of $(i\partial \overline{\partial}\varphi)^{n-3}$ can be extended to the holomorphic function
on $\Omega$.  
\end{abstract}
\maketitle

\subjclass{{\bf 2010 Mathematics Subject Classification.} 32A10, 32U10.}

\section{Introduction}
Hartogs extension theorem is stated as follows: 

{\em Let $\Omega$ be an open subset in $\mathbb{C}^{n}$ ($n \geq 2$) and let $K \subset \Omega$ be a compact subset such that $\Omega \setminus K$ is connected.  
Then any holomorphic function on $\Omega \setminus K$ can be extended to a holomorphic function on $\Omega$.}

This is one of the major difference between the theory of one and several complex variables since any open subset is a domain of holomorphy in the case of one variable.  
In this paper, we give a new example of a subdomain such that any holomorphic function on the subdomain can be extended holomorphically to the entire domain.  

Let $T$ be a smooth form or a current in a domain in $\mathbb{C}^{n}$.  
We denote by $\mathrm{supp}\, T$ the support of $T$.  
Our main theorem is the following: 
\begin{theorem}\label{theorem:1}
Let $n \geq 4$ and $\Omega$ be a bounded hyperconvex domain in $\mathbb{C}^{n}$.  
Let $\varphi$ be a negative smooth plurisubharmonic function on $\Omega$ such that $\varphi(z) \to 0$ when $z \to \partial \Omega$.  
Let $V \subset \Omega$ be a connected open neighborhood of $\mathrm{supp}\, (i\partial \overline{\partial}\varphi)^{n-3}$.  
Then any holomorphic function on $V$ can be extended to the holomorphic function on $\Omega$.  
\end{theorem}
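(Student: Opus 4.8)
The plan is to turn the extension problem into a $\bar\partial$-equation on $\Omega$ and then to solve that equation in such a way that the solution vanishes near $S:=\mathrm{supp}\,(i\partial\bar\partial\varphi)^{n-3}$; subtracting this solution from a naive smooth extension will produce the holomorphic extension. On the open set $\Omega\setminus S$ one has $(i\partial\bar\partial\varphi)^{n-3}\equiv 0$, so there the Levi form $i\partial\bar\partial\varphi$ has complex rank $\le n-4$, i.e.\ its kernel has complex dimension $\ge 4$ at every point. Given $f\in\mathcal O(V)$, pick $\chi\in C^\infty(\Omega)$ with $0\le\chi\le 1$, $\chi\equiv 1$ on a neighbourhood $N$ of $S$, and $\mathrm{supp}\,\chi\subset V$. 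Then $f_0:=\chi f$ (extended by $0$) is smooth on $\Omega$, equals $f$ on $N$, and $g:=\bar\partial f_0=f\,\bar\partial\chi$ is a smooth $\bar\partial$-closed $(0,1)$-form with $\mathrm{supp}\,g\subset V\setminus N$. The whole force of the hypothesis is encoded in one fact: \emph{on $\mathrm{supp}\,g$ the Levi form of $\varphi$ has a kernel of dimension $\ge 4$}. If one can find $u$ on $\Omega$ with $\bar\partial u=g$ and $u\equiv 0$ on some neighbourhood of $S$ contained in $N$, then $F:=f_0-u$ is holomorphic on $\Omega$ and $F=f$ on that neighbourhood; since $V$ is connected, $F=f$ on all of $V$ by analytic continuation, and $F$ is the desired extension.

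The core of the proof is thus to solve $\bar\partial u=g$ \emph{with} the prescribed vanishing, which I would do by $L^2$ methods on the hyperconvex domain $\Omega$ with a family of weights $e^{-\Phi_t}$. The weight should involve $\varphi$ itself (and, to control everything up to $\partial\Omega$, a term built from the exhaustion such as $-\log(-\varphi)$, in the Donnelly--Fefferman spirit) together with a corrector $t\rho$, where $\rho\ge 0$ is $\equiv 0$ on a neighbourhood of $S$ and $\ge 1$ on $\mathrm{supp}\,g$; then in the basic $L^2$-estimate the right-hand side over $\mathrm{supp}\,g$ carries a factor that decays in $t$, while on $N$ the weight is essentially unchanged, so the solutions $u_t$ are driven to $0$ on $N$. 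The delicate point is the admissibility of $\rho$: by the maximum principle no \emph{plurisubharmonic} function can vanish on a neighbourhood of $S$ and be positive on the shell $\mathrm{supp}\,\bar\partial\chi$ surrounding it, so $\rho$ is forced to fail plurisubharmonicity there, and the only way to keep the $\bar\partial$-estimate alive is to confine the negative part of $i\partial\bar\partial\rho$ along $\mathrm{supp}\,g$ to the $(\ge 4)$-dimensional kernel of $i\partial\bar\partial\varphi$ — that is, to build $\rho$ (and, in the same way, to choose $\chi$ so that $\bar\partial\chi$ and hence $g$ are adapted) from functions that are constant along the local foliation of $\mathrm{supp}\,g$ by the Levi kernel of $\varphi$, and then to absorb the residual error by a twisted Hörmander--Berndtsson inequality. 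It is in making this estimate close that the rank bound $\mathrm{rank}_{\mathbb C}\,i\partial\bar\partial\varphi\le n-4$, equivalently the hypothesis on $\mathrm{supp}\,(i\partial\bar\partial\varphi)^{n-3}$ together with $n\ge 4$, is consumed.

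To finish, I would first carry this out on a regular sublevel set $\Omega_c=\{\varphi<c\}\Subset\Omega$ (so that $g$ is square-integrable there, $f$ being bounded on the compact set $\mathrm{supp}\,\chi\cap\overline{\Omega_c}\subset V$), extract an $L^2_{\mathrm{loc}}$-limit of the $u_t$ to get $u_c$ with $\bar\partial u_c=g$ on $\Omega_c$ and $u_c\equiv 0$ near $S\cap\Omega_c$, and then let $c\uparrow 0$ along a suitable sequence; the partial extensions $f_0-u_c$ are mutually compatible (again by uniqueness on connected pieces, propagated inside $V$ by analytic continuation), and they patch to the global $F\in\mathcal O(\Omega)$, $\Omega=\bigcup_c\Omega_c$.

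The step I expect to be the main obstacle is the weight construction in the second paragraph: producing a corrector that simultaneously kills the solution near $S$ and keeps the $\bar\partial$-estimate valid, which demands that one exploit at once the hyperconvexity (for the behaviour near $\partial\Omega$) and the $(\ge 4)$-dimensional degeneracy of $i\partial\bar\partial\varphi$ along $\mathrm{supp}\,\bar\partial\chi$, so that the unavoidable loss of plurisubharmonicity of the corrector lands exactly in the Levi kernel where it can be compensated. A secondary technical nuisance is that $S$ need not be relatively compact in $\Omega$, so $\mathrm{supp}\,\bar\partial\chi$ may reach $\partial\Omega$; the sublevel-set exhaustion handles this, but the patching of the partial extensions then has to be done with some care.
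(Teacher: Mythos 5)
Your outer skeleton (cut off $f$ by $\chi$, set $g=\bar\partial(\chi f)$ supported away from $S=\mathrm{supp}\,(i\partial\bar\partial\varphi)^{n-3}$, work on sublevel sets $\{\varphi<r\}$, patch as $r\uparrow 0$) agrees with the paper, and your observation that the Levi form has rank $\le n-4$ on $\mathrm{supp}\,g$ correctly identifies the hypothesis being used. But the core step is where you diverge, and as proposed it has a genuine gap. You aim to solve $\bar\partial u=g$ with $u\equiv 0$ on an \emph{open neighborhood} of $S$, via a weight corrector $t\rho$ with $\rho$ vanishing near $S$, positive on $\mathrm{supp}\,g$, and with the negativity of $i\partial\bar\partial\rho$ ``confined to the Levi kernel of $\varphi$.'' This mechanism does not close. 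In the Bochner--Kodaira term for $(0,1)$-forms (equation (1) of the paper) each diagonal entry is a sum of $n-1$ eigenvalues of the Hessian of the weight, so a negative eigenvalue $-\mu$ of $t\,i\partial\bar\partial\rho$, in whatever direction it points, contributes $-t\mu$ to $n-1$ of the $n$ entries; it must be absorbed by positivity that also grows like $t$, and the Levi-kernel directions of $\varphi$ are precisely where $\varphi$ supplies \emph{no} compensating positivity. So confining the loss to the kernel makes things worse, not better, and no $t$-independent twisted estimate survives as $t\to\infty$. Moreover the target itself is stronger than what is needed or what the paper obtains: the paper never produces $u$ vanishing on a neighborhood of $S$.

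What the paper actually does is solve $\bar\partial u=\alpha$ with respect to the degenerating K\"ahler metrics $\omega_\varepsilon=i\partial\bar\partial(\varepsilon\psi+\phi)$, $\phi=-\log(-\varphi)$, with weight $e^{\psi-\delta\phi}$ for $n-3<\delta<n-2$ (Donnelly--Fefferman/Berndtsson--Charpentier twisting, Lemmas 1--3). The key point (Lemma 4) is that on $\mathrm{supp}\,\alpha$, where the Levi rank is $\le n-4$, one has the $\varepsilon$-uniform bound $\langle A^{-1}_{\varepsilon,\delta}\alpha,\alpha\rangle_{\omega_\varepsilon}\le (n-k-1)^{-1}|\alpha|^2_{i\partial\bar\partial\psi}$, while the volume forms $\omega_\varepsilon^n$ decrease to $(i\partial\bar\partial\phi)^n$, which vanishes on $\mathrm{supp}\,\alpha$; hence the right-hand side of the estimate tends to $0$ and the limit solution satisfies only $\int|u|^2(i\partial\bar\partial\phi)^n=0$, i.e.\ $u=0$ on $\mathrm{supp}\,(i\partial\bar\partial\phi)^n$. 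Extracting that limit requires an additional interior estimate (Theorem 2, Section 4) bounding ordinary $L^2$ norms on sublevel sets by Monge--Amp\`ere-type integrals, which your proposal has no substitute for. Finally, the numerology $n-3$ and $n\ge 4$ is consumed in the window $n-3<\delta<n-2$ (integrability of $e^{-\delta\phi}dV_{\omega_\varepsilon}$ near $\partial\Omega$ versus the constraint $n>2+\delta$ in the approximation lemma), which your ``kernel of dimension $\ge 4$'' heuristic does not by itself account for. In short: the reduction and the patching are right, but the central analytic device you propose is not the paper's and, as described, cannot be made to work.
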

If a holomorphic function is defined on a non-pluriharmonic locus, we can remove the assumption of the regularity of $\varphi$.  
\begin{corollary}\label{corollary:1}
Let $n \geq 4$ and $\Omega$ be a bounded hyperconvex domain in $\mathbb{C}^{n}$.  
Let $\varphi$ be a negative continuous plurisubharmonic function on $\Omega$ such that $\varphi(z) \to 0$ when $z \to \partial \Omega$.  
Let $V \subset \Omega$ be a connected open neighborhood of $\mathrm{supp}\, i\partial \overline{\partial}\varphi$.  
Then any holomorphic function on $V$ can be extended to the holomorphic function on $\Omega$.  
\end{corollary}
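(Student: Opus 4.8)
The plan is to deduce Corollary~\ref{corollary:1} from Theorem~\ref{theorem:1} by replacing the merely continuous function $\varphi$ by a smooth one with the same support behaviour. Let $f$ be a holomorphic function on $V$, and put $F:=\mathrm{supp}\,(i\partial\overline{\partial}\varphi)$, a closed subset of $\Omega$ contained in $V$. On $\Omega\setminus F$ the current $i\partial\overline{\partial}\varphi$ vanishes, so $\varphi$ is pluriharmonic there, hence $C^{\infty}$ on $\Omega\setminus F$ by elliptic regularity (pluriharmonic functions being real analytic). Thus $\varphi$ is already smooth off the set $F\subset V$, and the only task is to smooth it inside a neighbourhood of $F$ lying in $V$, keeping it plurisubharmonic.

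I claim it suffices to produce a $C^{\infty}$ negative plurisubharmonic exhaustion $\psi$ of $\Omega$ with $\mathrm{supp}\,(i\partial\overline{\partial}\psi)\subset V$. Indeed, any negative plurisubharmonic exhaustion tends to $0$ at $\partial\Omega$: the exhaustion condition $\{\psi<-c\}\Subset\Omega$ forces $\liminf_{z\to\partial\Omega}\psi(z)\ge 0$, while the maximum principle on the bounded domain $\Omega$ gives $\limsup_{z\to\partial\Omega}\psi(z)=\sup_{\Omega}\psi\le 0$. Hence $\psi$ satisfies the hypotheses of Theorem~\ref{theorem:1}. Moreover $\mathrm{supp}\,(i\partial\overline{\partial}\psi)^{\,n-3}\subset\mathrm{supp}\,(i\partial\overline{\partial}\psi)\subset V$, so the connected open set $V$ is a neighbourhood of $\mathrm{supp}\,(i\partial\overline{\partial}\psi)^{\,n-3}$, and Theorem~\ref{theorem:1} applied with $\psi$ in place of $\varphi$ extends $f$ to all of $\Omega$.

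To construct $\psi$, choose an open set $W$ with $F\subset W\subset\overline{W}\subset V$ (closure in $\Omega$); then $\varphi$ is smooth on $\Omega\setminus F$, which contains a neighbourhood of $\partial W\cap\Omega$ and of $\Omega\setminus W$. Since $\overline{W}$ need not be relatively compact in $\Omega$, cover $W$ by a locally finite family of balls relatively compact in $\Omega$, and on each ball regularize $\varphi$ by convolution $\varphi\ast\rho_{\varepsilon}$, which is smooth, plurisubharmonic and decreases to $\varphi$ as $\varepsilon\downarrow 0$. Patch these local smoothings to one another by the regularized-maximum technique of Richberg, adding arbitrarily small strictly plurisubharmonic corrections only where they disagree, hence only inside $W$ — this uses that the hyperconvex domain $\Omega$ is pseudoconvex — and then splice the result back to $\varphi$ near $\partial W\cap\Omega$ by a further regularized maximum with a small parameter. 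Choosing the scales $\varepsilon$, the correction sizes and the splicing parameter to vanish fast enough towards $\partial\Omega$, one gets $\psi$ that is $C^{\infty}$ plurisubharmonic on $\Omega$, satisfies $\varphi\le\psi\le\tfrac12\varphi$, and equals $\varphi$ on a neighbourhood of $\Omega\setminus W$. Then $\{\psi<-c\}\subset\{\varphi<-c\}\Subset\Omega$, so $\psi$ is an exhaustion; $\psi\le\tfrac12\varphi<0$, so $\psi$ is negative; and since $\psi=\varphi$ near $\Omega\setminus W$, where $\varphi$ is pluriharmonic, $\mathrm{supp}\,(i\partial\overline{\partial}\psi)\subset\overline{W}\subset V$, as required.

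The convolutions and the regularity statement are routine; the delicate point, which I expect to be the main obstacle, is the behaviour of the Richberg patching near $\partial\Omega$, where $F$ may accumulate and the admissible region $W\subset V$ may be arbitrarily thin. One must arrange that the local scales, the strictly plurisubharmonic corrections and the splicing parameter all tend to $0$ rapidly enough as $z\to\partial\Omega$ that $\psi$ stays negative and exhaustive while the entire modification remains confined to $W\subset V$; this is where hyperconvexity of $\Omega$ is used, and it is the only genuinely technical part of the reduction.
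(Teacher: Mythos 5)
Your reduction to Theorem~\ref{theorem:1} hinges entirely on producing a smooth negative plurisubharmonic exhaustion $\psi$ of $\Omega$ that coincides with $\varphi$ outside a set $W$ with $\overline{W}\subset V$, and that construction is a genuine gap, not a routine technicality. The Richberg patching you invoke requires \emph{strict} plurisubharmonicity: to glue the local convolutions $\varphi\ast\rho_{\varepsilon_j}$ by a regularized maximum, each local piece must be made to drop \emph{below} its neighbours near the boundary of its own chart (so that it leaves the maximum before it ceases to be defined), and this is done by subtracting terms of the form $\delta_j|z-a_j|^2$; such a subtraction preserves plurisubharmonicity only when $i\partial\overline{\partial}\varphi\geq\delta_j\, i\partial\overline{\partial}|z|^2$ on the chart. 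Here $\varphi$ is pluriharmonic on all of $\Omega\setminus F$ and need not be strictly plurisubharmonic anywhere, so there is no positivity to absorb these corrections; \emph{adding} small strictly plurisubharmonic terms, as you propose, moves each piece the wrong way (it raises it near the chart boundary). The final splice back to $\varphi$ near $\partial W\cap\Omega$ fails for the same reason in sharper form: a regularized maximum of $\varphi$ with the smoothed function $u$ must select the smooth competitor on a whole neighbourhood of $F$ (otherwise $\psi$ is not smooth) and select $\varphi$ near $\partial W$, but $u\geq\varphi$ with $u-\varphi$ small of completely uncontrolled size relative to any threshold, and $u-\varphi$ can be arbitrarily close to $0$ on $F$ itself, so no admissible shift exists. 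Global smooth approximation of merely continuous plurisubharmonic functions is already delicate, and no standard regularization keeps the modification confined to a prescribed neighbourhood of $\mathrm{supp}\, i\partial\overline{\partial}\varphi$, which is exactly what you need for $\mathrm{supp}\, i\partial\overline{\partial}\psi\subset V$.

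The paper avoids this problem by never smoothing $\varphi$ on all of $\Omega$. It works on the relatively compact sublevel sets $\Omega(r)=\{\varphi<r\}\Subset\Omega$, where a single global convolution $\varphi_{\varepsilon}=\varphi\ast h_{\varepsilon}$ is smooth and plurisubharmonic with $\mathrm{supp}\, i\partial\overline{\partial}\varphi_{\varepsilon}$ contained in an $\varepsilon$-neighbourhood of $\mathrm{supp}\, i\partial\overline{\partial}\varphi$, hence in $V$, with no gluing at all; it then runs the argument of Theorem~\ref{theorem:1} (via Theorem~\ref{theorem:3}) on a sublevel set $\Omega_{\varepsilon}(t)$ of $\varphi_{\varepsilon}$, and patches the resulting extensions together as $r\to0$ using the identity theorem, the continuity of the Monge--Amp\`ere operator guaranteeing that all of them agree with $f$ on a fixed open set $W$ meeting $\mathrm{supp}\,(i\partial\overline{\partial}\phi)^{n}$. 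To repair your argument you would either have to prove the confined global smoothing statement (which I do not believe follows from any standard theorem) or adopt this exhaustion-by-sublevel-sets scheme.
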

We explain a motivation of Theorem~\ref{theorem:1}.  
Let $E$ be a compact subset of $\mathbb{C}^{n}$.  
We define the Shilov boundary of $E$ by the smallest closed subset $\partial_{S}E$
of $E$ such that, for each function $f$ which is holomorphic on a neighborhood of 
$E$ the equality $\max_{E}|f| = \max_{\partial_{S}E}|f|$ holds.  
Let $B_{\varphi}(r)=\{z \in \Omega \, |\, \varphi(z) < r\}$ and let $x \in B_{\varphi}(r)$.  
It is known that there exists a probability measure $\mu_{x}$ supported on $\partial_{S}\overline{B_{\varphi}(r)}$ such that 
$f(x) = \int f d\mu_{x}$ for any holomorphic functions on an open neighborhood of $\overline{B_{\varphi}(r)}$ (see \cite{Duv-Sib}).  
This measure is called Jensen measure.  
Hence we may consider that Shilov boundaries of $\overline{B_{\varphi}(r)}$ ($r < 0$) 
are important for the existence of holomorphic functions on $\Omega$.  
On the other hand, \cite{Bed-Kal} shows that there exists a complex foliation on $\Omega \setminus \mathrm{supp}\, (i\partial \overline{\partial} \varphi)^{j}$ ($1 \leq j \leq n$) by complex submanifolds such that the restriction of $\varphi$ on any leaf of the foliation is pluriharmonic.  
It follows that, for any $z \in \Omega \setminus \mathrm{supp}\, (i\partial \overline{\partial}\varphi)^{n-1}$, there exists a complex curve through $z$ contained in a level set of $\varphi$.  
Then $z$ is not contained in the Shilov boundaries of level sets of $\varphi$.  
In this context, it might be interesting to ask whether one can extend holomorphic functions defined on $\mathrm{supp}\, (i\partial \overline{\partial} \varphi)^{n-1}$ to the holomorphic functions on $\Omega$.  
In our theorem, we show that this question is true if $\mathrm{supp}\, (i\partial \overline{\partial}\varphi)^{n-1}$ is replaced by $\mathrm{supp}\, (i\partial \overline{\partial}\varphi)^{n-3}$.  

The proof consists in solving $\overline{\partial}$ equation in the $L^{2}$-space defined by the degenerate Monge-Amp\`ere measure.  
In Section~\ref{section:3}, we prove Donnelly-Fefferman and Berndtsson type $L^{2}$-estimate (\cite{Don-Fef}, \cite{Ber}) .  
In Lemma~\ref{lemma:1} and Lemma~\ref{lemma:3}, we use the argument in Theorem~2.3 of \cite{Ber-Cha} to prove our $L^{2}$-estimate from (\ref{equation:2}) below.  
We solve $\overline{\partial}$ equations in the $L^{2}$-spaces defined by the complete K\"ahler metrics which converge to $-i\partial \overline{\partial}(\log (-\varphi))$, which is no longer a K\"ahler metric in general.  
To guarantee the weak convergence of solutions constructed in Section~\ref{section:3}, 
we show an interior estimate of the solutions in Section~\ref{section:4}.  
Section~\ref{section:4} can be read independently of other sections.  
\medskip

{\it Acknowledgment.}
This work was supported by the 
Grant-in-Aid for Scientific Research (KAKENHI No.\! 17K14200).  

\section{Preliminaries}\label{section:2}
First we introduce the set up and some notation.  
For details, we refer to \cite{Dem}.  
Let $\Omega$ be a domain in $\mathbb{C}^{n}$ and let $\omega$ be a K\"ahler metric on $\Omega$.  
Let $\psi$ be a smooth function on $\Omega$.  
By $L^{2}_{p, q}(\Omega, e^{\psi}, \omega)$ we denote the Hilbert space of $(p, q)$-forms $\alpha$ which satisfy 
\[
\|\alpha\|_{\psi, \omega}^{2} := \int_{\Omega}|\alpha|_{\omega}^{2}e^{\psi}dV_{\omega}.  
\]
Here $dV_{\omega} = (n!)^{-1}\omega^{n}$.   
For simplicity we put $L^{2}(\Omega, e^{\psi}, \omega) = L^{2}_{0, 0}(\Omega, e^{\psi}, \omega)$.  
Let $A^{2}(\Omega, e^{\psi}, \omega)$ be the space of all holomorphic functions in $L^{2}(\Omega, e^{\psi}, \omega)$.  
Let $\overline{\partial}_{\psi}^{*}$ be the Hilbert space adjoint of linear, closed, densely defined operator 
\[
\overline{\partial} : L^{2}_{p, q}(\Omega, e^{\psi}, \omega) \to L^{2}_{p. q+1}(\Omega, e^{\psi}, \omega).  
\]
Let $\Lambda_{\omega}$ be the adjoint of multiplication by $\omega$.  
If $q \geq 1$ and $\omega$ is a complete K\"ahler metric, 
the Bochner-Kodaira-Nakano inequality shows that 
\[
\|\overline{\partial} \alpha\|_{\psi, \omega}^{2} + \|\overline{\partial}_{\psi}^{*} \alpha\|^{2}_{\psi, \omega} \geq \int_{\Omega} \langle [-i\partial \overline{\partial}\psi, \Lambda_{\omega}]\alpha, \alpha \rangle_{\omega} e^{\psi} dV_{\omega} 
\]
for any $\alpha \in L^{2}_{p, q}(\Omega, e^{\psi}, \omega)$ which is contained in the both domains of $\overline{\partial}$ and $\overline{\partial}^{*}_{\psi}$.  
At each point $x \in \Omega$, we may choose an orthonormal basis $\sigma_{1}, \ldots, \sigma_{n}$ 
for the holomorphic cotangent bundle with respect to $\omega$ such that $i \partial \overline{\partial} \psi = \lambda_{1} i \sigma_{1} \wedge \overline{\sigma_{1}} + \cdots + \lambda_{n} i \sigma_{n} \wedge \overline{\sigma_{n}}$.  
Let $\alpha$ be a $(0, q)$-form.  
We write 
$\alpha = \sum_{|J| = q}\alpha_{J} \overline{\sigma_{J}}$ where $J = (j_{1}, \ldots, j_{q})$ is a multi-index with $j_{1} < \cdots < j_{q}$ and $\overline{\sigma_{J}} = \overline{\sigma_{j_{1}}} \wedge \cdots \wedge \overline{\sigma_{j_{q}}}$.   
Then \begin{equation}\label{equation:1}
[-i\partial \overline{\partial}\psi, \Lambda_{\omega}]\alpha = \sum_{|J| = q} \left( \sum_{1 \leq j \leq n, j \not \in J} \lambda_{j} \right) \alpha_{J} \overline{\sigma_{J}}
\end{equation}
Assume that the operator $A_{\omega, \psi} = [-i\partial \overline{\partial}\psi, \Lambda_{\omega}]$ is positive definite on $\Omega$, and that $\omega$ is a complete K\"ahler metric.  
Then, for any closed form $\alpha \in L^{2}_{p, q}(\Omega, e^{\psi}, \omega)$ which satisfies 
$\int_{\Omega} \langle A_{\omega, \psi}^{-1}\alpha, \alpha \rangle_{\omega} e^{\psi} dV_{\omega} < +\infty$, there exists $u \in L^{2}_{p, q-1}(\Omega, e^{\psi}, \omega)$ such that $\overline{\partial}u = \alpha$ and 
\begin{equation}\label{equation:2}
\|u\|^{2}_{\psi, \omega} \leq \int_{\Omega} \langle A_{\omega, \psi}^{-1}\alpha, \alpha \rangle_{\omega} e^{\psi}dV_{\omega}.  
\end{equation}

\section{Weighted $L^{2}$-estimate}\label{section:3}
The purpose of this section is Proposition~\ref{proposition:1} below.  
Let $n \geq 4$.  
Let $\Omega \subset \mathbb{C}^{n}$ be a bounded hyperconvex domain and let $\varphi$ be a negative smooth plurisubharmonic function on $\Omega$ such that $\varphi(z) \to 0$ when $z \to \partial \Omega$.  
Let $\phi = - (\log(-\varphi))$.  
Then $\phi$ is an exhaustive smooth plurisubharmonic function such that $i \partial \overline{\partial} \phi \geq i \partial \phi \wedge \overline{\partial} \phi$.  
Let $\psi$ be a smooth strongly plurisubharmonic function on a neighborhood of $\overline{\Omega}$.  
To prove our main theorem, we may assume that $\psi = |z|^{2}$.  
Let $\varepsilon >0$ be a small positive number and let $\omega_{\varepsilon} = i\partial \overline{\partial}(\varepsilon \psi + \phi)$.  
Then $\omega_{\varepsilon}$ is a complete K\"ahler metric on $\Omega$ since $\phi$ is exhaustive and $|\partial \phi|_{\omega_{\varepsilon}} < 1$.  
Let $c > 0$ and let $A_{\varepsilon, c} = A_{\omega_{\varepsilon}, \psi + c \phi} = [-i\partial \overline{\partial}(\psi + c \phi), \Lambda_{\omega_{\varepsilon}}]$.  
We start by showing the following lemma: 
\begin{lemma}\label{lemma:1}
Let $\delta, \delta' >0$.   
Let $\alpha$ be a $\overline{\partial}$-closed $(0, 1)$-form such that 
$\alpha \in L^{2}_{0, 1}(\Omega, e^{\psi + \delta'\phi}, \omega_{\varepsilon})$.  
Assume that $n > 1 + \delta$ and that $\varepsilon \leq \delta^{-1}$.  
Then there exists a function $u \in L^{2}(\Omega, e^{\psi+\delta'\phi}, \omega_{\varepsilon})$ such that $\overline{\partial} u = \alpha$ and 
\[
\int_{\Omega}|u|^{2}e^{\psi-(\delta-\delta') \phi}dV_{\omega_{\varepsilon}} \leq C_{n, \delta}\int_{\Omega}\langle A_{\varepsilon, \delta+ \delta'}^{-1}\alpha, \alpha \rangle_{\omega_{\varepsilon}} e^{\psi -(\delta-\delta') \phi}dV_{\omega_{\varepsilon}}.  
\]
Here $C_{n, \delta}$ is a positive constant which depends only on $n$ and $\delta$.  
\end{lemma}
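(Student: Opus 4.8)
The plan is to apply the general solvability statement of Section~\ref{section:2} — namely the existence of an $L^2$-minimal solution with estimate (\ref{equation:2}) — but in a cleverly chosen weighted space, and then convert that estimate into the one claimed by conjugating with a bounded factor. The natural first attempt would be to solve $\overline\partial u = \alpha$ directly in $L^2(\Omega, e^{\psi - (\delta-\delta')\phi}, \omega_\varepsilon)$; this fails because the curvature operator $A_{\omega_\varepsilon, \psi - (\delta-\delta')\phi}$ need not be positive definite (the term $-(\delta-\delta')\,i\partial\overline\partial\phi$ is negative). The Donnelly--Fefferman--Berndtsson trick, following Theorem~2.3 of \cite{Ber-Cha} as the paper announces, is to introduce an auxiliary bounded weight. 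I would set $w = e^{-(\delta-\delta')\phi/?}$... more precisely, write the target weight as $e^{\psi - (\delta-\delta')\phi} = e^{\psi + \delta'\phi}\cdot e^{-\delta\phi}$, and look for $u$ of the form that makes the bounded factor $e^{-\delta\phi}$ (which equals $(-\varphi)^{\delta}$, hence bounded since $\varphi$ is negative and $\Omega$ bounded with $\varphi\to 0$) absorbable into the curvature.

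Concretely, the key computation is the following. For a $(0,1)$-form and a smooth real function $\eta$, one has the twisted identity: if $u$ solves $\overline\partial u = \alpha$ with the estimate against $A_{\omega_\varepsilon, \psi + (\delta+\delta')\phi}$ in the weight $e^{\psi+(\delta+\delta')\phi}$, then setting $v = e^{-??}u$ does not directly work; instead one runs the Bochner--Kodaira--Nakano inequality with weight $\psi + \delta'\phi$ and extra function, exploiting that $i\partial\overline\partial\phi \ge i\partial\phi\wedge\overline\partial\phi$ and $|\partial\phi|_{\omega_\varepsilon} < 1$. The point is that, writing the would-be solution's norm in the weight $e^{\psi-(\delta-\delta')\phi}$ and expanding $\|\overline\partial_{\psi-(\delta-\delta')\phi}^* \beta\|^2$ for a test form $\beta$, the cross terms involving $\partial\phi$ can be dominated — using Cauchy--Schwarz with a parameter and the bound $|\partial\phi|^2_{\omega_\varepsilon}<1$ — by the positive curvature contribution $\big(\sum_{j\notin J}\lambda_j\big)$ of $A_{\varepsilon,\delta+\delta'}$ together with the gain coming from $\varepsilon\psi$. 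The hypotheses $n > 1+\delta$ and $\varepsilon \le \delta^{-1}$ are precisely what make the resulting numerical inequality hold: $n-1$ is (roughly) the smallest eigenvalue-sum of $[-i\partial\overline\partial\phi,\Lambda_{\omega_\varepsilon}]$ on $(0,1)$-forms after the $\varepsilon\psi$ regularization, and one needs it to beat $\delta$ after the Cauchy--Schwarz loss.

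So the steps, in order, are: (1) reduce to $\psi = |z|^2$ as already noted, and record that $e^{-\delta\phi} = (-\varphi)^\delta$ is bounded on $\Omega$; (2) apply the solvability result of Section~\ref{section:2} with metric $\omega_\varepsilon$ and weight $\psi + (\delta+\delta')\phi$ — valid because $A_{\varepsilon,\delta+\delta'} = [-i\partial\overline\partial(\psi + (\delta+\delta')\phi),\Lambda_{\omega_\varepsilon}]$ is positive definite (both $\psi$ and $\phi$ are plurisubharmonic, and $\psi$ is strictly so) and $\omega_\varepsilon$ is complete — to get some solution $u_0$ with $\|u_0\|^2_{\psi+(\delta+\delta')\phi,\omega_\varepsilon} \le \int \langle A_{\varepsilon,\delta+\delta'}^{-1}\alpha,\alpha\rangle e^{\psi+(\delta+\delta')\phi}dV$; (3) take instead the $L^2$-minimal solution $u$ in the target space $L^2(\Omega, e^{\psi-(\delta-\delta')\phi},\omega_\varepsilon)$ — it exists because $u_0$ times the bounded factor shows the space is nonempty among solutions — and use the dual/variational characterization of the minimal solution: $\int \langle u,\overline\partial_{\psi-(\delta-\delta')\phi}^*\beta\rangle e^{\psi-(\delta-\delta')\phi}dV = \int\langle\alpha,\beta\rangle e^{\psi-(\delta-\delta')\phi}dV$ for $\overline\partial$-closed $\beta$ in the right space; (4) bound the right side by Cauchy--Schwarz against $A_{\varepsilon,\delta+\delta'}$, and bound $\|A_{\varepsilon,\delta+\delta'}^{1/2}\beta\|^2$ by $\|\overline\partial^*_{\psi-(\delta-\delta')\phi}\beta\|^2$ plus controllable cross terms via the Berndtsson-type manipulation, the constant $C_{n,\delta}$ emerging from the parameter in Cauchy--Schwarz.

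The main obstacle I expect is step (4): pushing the twisted Bochner--Kodaira estimate through with the "wrong-sign" weight $-(\delta-\delta')\phi$ while keeping the constant dependent only on $n$ and $\delta$ (not on $\delta'$, nor on $\varepsilon$ once $\varepsilon \le \delta^{-1}$, nor on $\Omega$). This requires carefully isolating which part of $A_{\varepsilon,\delta+\delta'}$ — the $(\delta+\delta')\,[-i\partial\overline\partial\phi,\Lambda_{\omega_\varepsilon}]$ piece versus the $[-i\partial\overline\partial\psi,\Lambda_{\omega_\varepsilon}]$ piece (which after $\varepsilon\psi$-scaling is comparable to $\varepsilon^{-1}\cdot n$ in the relevant direction) — is spent absorbing the $\partial\phi$ cross terms and which part survives to give the net positive lower bound. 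The inequality $i\partial\overline\partial\phi\ge i\partial\phi\wedge\overline\partial\phi$ is the crucial structural input that lets the absorption close; the role of $n>1+\delta$ is to guarantee that, on $(0,1)$-forms, the surviving eigenvalue sum $\sum_{j\notin J}(\cdots)$ stays bounded below by a positive multiple of the full $A_{\varepsilon,\delta+\delta'}$-eigenvalues, uniformly. Once this numerical lemma is in place the estimate follows formally.
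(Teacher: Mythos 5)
Your plan correctly names the Donnelly--Fefferman/Berndtsson circle of ideas and correctly identifies where the hypotheses $n>1+\delta$ and $\varepsilon\le\delta^{-1}$ enter (they give $A_{\varepsilon,\delta+\delta'}\ge (n-1)\delta$ on $(0,1)$-forms via (\ref{equation:1}), hence $\langle A_{\varepsilon,\delta+\delta'}^{-1}\overline{\partial}\phi,\overline{\partial}\phi\rangle_{\omega_{\varepsilon}}<\frac{1}{(n-1)\delta}$, which is what lets the absorption close). But as written this is a plan, not a proof: the decisive step (your step (4)) is explicitly left as ``the main obstacle I expect,'' and the text still contains placeholders (``$e^{-(\delta-\delta')\phi/?}$'', ``$v=e^{-??}u$''). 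The single idea that makes the lemma work is missing. The paper does \emph{not} run a dual estimate for the minimal solution in the target weight; it takes the minimal solution $u$ in $L^{2}(\Omega,e^{\psi+\delta'\phi},\omega_{\varepsilon})$ (legitimate because $A_{\varepsilon,\delta'}^{-1}$ is bounded, so the hypothesis on $\alpha$ gives the needed finiteness), notes that $u\perp A^{2}(\Omega,e^{\psi+\delta'\phi},\omega_{\varepsilon})$ together with the inclusion $A^{2}(\Omega,e^{\psi+(\delta+\delta')\phi},\omega_{\varepsilon})\subset A^{2}(\Omega,e^{\psi+\delta'\phi},\omega_{\varepsilon})$ forces $ue^{-\delta\phi}$ to be orthogonal to $A^{2}(\Omega,e^{\psi+(\delta+\delta')\phi},\omega_{\varepsilon})$, hence $ue^{-\delta\phi}$ is itself the \emph{minimal} solution of $\overline{\partial}(ue^{-\delta\phi})=(\alpha-\delta u\overline{\partial}\phi)e^{-\delta\phi}$ in the heavier weight; one then applies the a priori bound (\ref{equation:2}) to \emph{that} equation and absorbs the $\delta u\overline{\partial}\phi$ term by Cauchy--Schwarz with a small parameter $t$. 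Without this transference of minimality your Cauchy--Schwarz has nothing to be tested against.

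Two further concrete defects. First, your step (2) is not justified: solving $\overline{\partial}u_{0}=\alpha$ in the weight $e^{\psi+(\delta+\delta')\phi}$ requires $\int_{\Omega}\langle A_{\varepsilon,\delta+\delta'}^{-1}\alpha,\alpha\rangle_{\omega_{\varepsilon}}e^{\psi+(\delta+\delta')\phi}dV_{\omega_{\varepsilon}}<+\infty$, and the hypothesis $\alpha\in L^{2}_{0,1}(\Omega,e^{\psi+\delta'\phi},\omega_{\varepsilon})$ does not give this, since $e^{\delta\phi}\to+\infty$ at $\partial\Omega$. (This is exactly why the paper first solves in the weight $e^{\psi+\delta'\phi}$ and only afterwards passes to the heavier weight, where the \emph{twisted} data $(\alpha-\delta u\overline{\partial}\phi)e^{-\delta\phi}$, not $\alpha$ itself, is what must be integrable.) Second, the lemma asserts $u\in L^{2}(\Omega,e^{\psi+\delta'\phi},\omega_{\varepsilon})$; your candidate, the minimal solution in the larger space $L^{2}(\Omega,e^{\psi-(\delta-\delta')\phi},\omega_{\varepsilon})$, is a different function and there is no reason it lies in the smaller space, so even a completed version of your argument would prove a weaker statement than the one claimed.
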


\begin{proof}
Since $C\omega_{\varepsilon} \leq i\partial \overline{\partial}(\psi + \delta'\phi)$ for some $C>0$, 
we have that $A^{-1}_{\varepsilon, \delta'} \leq C^{-1}$.   
By (\ref{equation:2}), there exists the solution $u \in L^{2}(\Omega, e^{\psi + \delta'\phi}, \omega_{\varepsilon})$ to $\overline{\partial} u = \alpha$ which is minimal in the $L^{2}(\Omega, e^{\psi+\delta'\phi}, \omega_{\varepsilon})$ norm.  
This means that $u \in A^{2}(\Omega, e^{\psi+\delta'\phi}, \omega_{\varepsilon})^{\perp}$.  
Since $\phi(z) \to +\infty$ when $z \to \partial\Omega$, 
we have that $ue^{-\delta \phi} \in L^{2}(\Omega, e^{\psi + (\delta + \delta') \phi}, \omega_{\varepsilon})$ and 
$A^{2}(\Omega, e^{\psi + (\delta + \delta') \phi}, \omega_{\varepsilon}) \subset A^{2}(\Omega, e^{\psi+\delta'\phi}, \omega_{\varepsilon})$.  
Hence $ue^{-\delta \phi} \in L^{2}(\Omega, e^{\psi + (\delta + \delta')\phi}, \omega_{\varepsilon}) \cap A^{2}(\Omega, e^{\psi + (\delta + \delta') \phi}, \omega_{\varepsilon})^{\perp}$.  
It follows that $\overline{\partial}(ue^{-\delta \phi}) = (\alpha-\delta u \overline{\partial}\phi)e^{-\delta \phi} \in L^{2}_{0, 1}(\Omega, e^{\psi + (\delta + \delta')\phi}, \omega_{\varepsilon})$ 
since $|\overline{\partial}\phi|_{\omega_{\varepsilon}} < 1$ and 
$u \in L^{2}(\Omega, e^{\psi + \delta'\phi}, \omega_{\varepsilon})$.   
Then $ue^{-\delta \phi}$ is the minimal solution in the $L^{2}(\Omega, e^{\psi + (\delta + \delta') \phi}, \omega_{\varepsilon})$ norm to $\overline{\partial}(ue^{-\delta \phi})$.  
We note that $A^{-1}_{\varepsilon, \delta + \delta'}$ is bounded from above in $\Omega$.  
By (\ref{equation:2}), we have that 
\begin{align*}
& \int_{\Omega}|u|^{2}e^{\psi-(\delta-\delta') \phi} dV_{\omega_{\varepsilon}} 
= \int_{\Omega} |ue^{-\delta \phi}|^{2} e^{\psi + (\delta + \delta') \phi} dV_{\omega_{\varepsilon}}\\
\leq & \int_{\Omega} \langle A^{-1}_{\varepsilon, \delta+\delta'}(\alpha - \delta u \overline{\partial}\phi), \alpha - \delta u \overline{\partial}\phi\rangle_{\omega_{\varepsilon}} e^{\psi - (\delta-\delta') \phi}dV_{\omega_{\varepsilon}} \\%
\leq & \left(1 + \frac{1}{t}\right) \int_{\Omega} \langle A^{-1}_{\varepsilon, \delta+\delta'}\alpha, \alpha \rangle_{\omega_{\varepsilon}} e^{\psi -(\delta-\delta') \phi}dV_{\omega_{\varepsilon}} + (1 + t) \delta^{2}\int_{\Omega} \langle A^{-1}_{\varepsilon, \delta+\delta'}
\overline{\partial}\phi, \overline{\partial}\phi \rangle_{\omega_{\varepsilon}} |u|^{2}e^{\psi - (\delta-\delta') \phi}dV_{\omega_{\varepsilon}}\nonumber
\end{align*}  
for every $t > 0$.  
Since $\varepsilon < \delta^{-1}$, we have that $\delta \omega_{\varepsilon} = \delta i  \partial\overline{\partial}(\varepsilon\psi + \phi) \leq i \partial \overline{\partial} (\psi + (\delta + \delta') \phi)$.  
By (\ref{equation:1}), 
it follows that $\langle A_{\varepsilon, \delta+\delta'} \beta, \beta \rangle_{\omega_{\varepsilon}} \geq (n-1) \delta |\beta|^{2}_{\omega_{\varepsilon}}$ for any $(0, 1)$-form $\beta$.  
Hence $\langle A^{-1}_{\varepsilon, \delta+\delta'} \overline{\partial} \phi, \overline{\partial}\phi \rangle_{\omega_{\varepsilon}} \leq \frac{1}{(n-1)\delta} |\overline{\partial}\phi|^{2}_{\omega_{\epsilon}} < \frac{1}{(n-1)\delta}$.  
By choosing $t$ so small, there exists a constant $C_{1}$ which depends only on $n$ and $\delta$ such that $(1 + t) \frac{\delta}{n-1} < C_{1}< 1$ since $n> 1 + \delta$.  
Then we have 
\begin{equation*}
(1-C_{1})\int_{\Omega} |u|^{2} e^{\psi - (\delta-\delta') \phi} dV_{\omega_{\varepsilon}} \leq  C_{2} \int_{\Omega} \langle A^{-1}_{\varepsilon, \delta+\delta'}\alpha, \alpha \rangle_{\omega_{\varepsilon}} e^{\psi - (\delta-\delta') \phi}dV_{\omega_{\varepsilon}}.  
\end{equation*}
Here $C_{2} = \left( 1 + \frac{1}{t}\right)$ depends only on $n$ and $\delta$.  
This completes the proof.  
\end{proof}
If there exists a sequence of $\overline{\partial}$-closed $(0, 1)$-forms in $L^{2}_{0, 1}(\Omega, e^{\psi +\delta'\phi}, \omega_{\varepsilon})$ which approximates $\alpha$, we can remove the assumption that $\alpha \in L^{2}_{0, 1}(\Omega, e^{\psi+\delta'\phi}, \omega_{\varepsilon})$ from Lemma~\ref{lemma:1}.  
\begin{lemma}\label{lemma:2}
Let $\delta, \delta' >0$.  
and let $\alpha$ be a $\overline{\partial}$-closed $(0, 1)$-form such that \\$\int_{\Omega}\langle A_{\varepsilon, \delta+\delta'}^{-1}\alpha, \alpha \rangle_{\omega_{\varepsilon}} e^{\psi -(\delta-\delta') \phi}dV_{\omega_{\varepsilon}} < + \infty$.  
Assume that there exist $\overline{\partial}$-closed $(0, 1)$-forms $\alpha_{j} \in L^{2}(\Omega, e^{\psi+\delta'\phi}, \omega_{\varepsilon})$ ($j = 1, 2, \ldots$) such that 
\[
\lim_{j \to \infty} \int_{\Omega} \langle A_{\varepsilon, \delta+\delta'}^{-1}(\alpha-\alpha_{j}), \alpha-\alpha_{j} \rangle_{\omega_{\varepsilon}} e^{\psi -(\delta-\delta') \phi}dV_{\omega_{\varepsilon}} = 0.  
\]
Assume that $n > 1 + \delta$ and that $\varepsilon \leq \delta^{-1}$.  
Then there exists $u \in L^{2}(\Omega, e^{\psi - (\delta-\delta') \phi}, \omega_{\varepsilon})$ such that $\overline{\partial} u = \alpha$ and 
\[
\int_{\Omega}|u|^{2}e^{\psi-(\delta-\delta') \phi}dV_{\omega_{\varepsilon}} \leq C_{n, \delta}\int_{\Omega}\langle A_{\varepsilon, \delta+\delta'}^{-1}\alpha, \alpha \rangle_{\omega_{\varepsilon}} e^{\psi -(\delta-\delta') \phi}dV_{\omega_{\varepsilon}}.  
\]
Here $C_{n, \delta} > 0$ depends only on $n$ and $\delta$.  
\end{lemma}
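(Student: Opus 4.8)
The plan is to apply Lemma~\ref{lemma:1} to each of the approximating forms $\alpha_{j}$ and then pass to a weak limit. Since each $\alpha_{j}$ is $\overline{\partial}$-closed and lies in $L^{2}_{0,1}(\Omega, e^{\psi+\delta'\phi}, \omega_{\varepsilon})$, and since $n > 1 + \delta$ and $\varepsilon \leq \delta^{-1}$, Lemma~\ref{lemma:1} (with the same $\delta, \delta'$) gives a function $u_{j} \in L^{2}(\Omega, e^{\psi+\delta'\phi}, \omega_{\varepsilon})$ with $\overline{\partial}u_{j} = \alpha_{j}$ and
\[
\int_{\Omega}|u_{j}|^{2}e^{\psi-(\delta-\delta')\phi}dV_{\omega_{\varepsilon}} \leq C_{n,\delta}\int_{\Omega}\langle A^{-1}_{\varepsilon,\delta+\delta'}\alpha_{j}, \alpha_{j}\rangle_{\omega_{\varepsilon}}e^{\psi-(\delta-\delta')\phi}dV_{\omega_{\varepsilon}}.
\]
First I would note that the right-hand side stays bounded in $j$: the map $\beta \mapsto \left(\int_{\Omega}\langle A^{-1}_{\varepsilon,\delta+\delta'}\beta,\beta\rangle_{\omega_{\varepsilon}}e^{\psi-(\delta-\delta')\phi}dV_{\omega_{\varepsilon}}\right)^{1/2}$ is a seminorm (because $A^{-1}_{\varepsilon,\delta+\delta'}$ is positive definite), so the hypothesis $\alpha_{j} \to \alpha$ in this seminorm, together with the assumed finiteness of the corresponding integral for $\alpha$, forces the right-hand sides to converge to $C_{n,\delta}\int_{\Omega}\langle A^{-1}_{\varepsilon,\delta+\delta'}\alpha,\alpha\rangle_{\omega_{\varepsilon}}e^{\psi-(\delta-\delta')\phi}dV_{\omega_{\varepsilon}}$. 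Hence $\{u_{j}\}$ is bounded in the Hilbert space $L^{2}(\Omega, e^{\psi-(\delta-\delta')\phi}, \omega_{\varepsilon})$, and after passing to a subsequence we may assume $u_{j} \rightharpoonup u$ weakly there.

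Next I would verify that $\overline{\partial}u = \alpha$ in the sense of distributions. On any $\Omega' \Subset \Omega$ the weights $e^{\psi+\delta'\phi}$, $e^{\psi-(\delta-\delta')\phi}$ and the volume form $dV_{\omega_{\varepsilon}}$ are mutually comparable to the Euclidean ones; moreover, as already observed in the proof of Lemma~\ref{lemma:1}, the relation $\delta\omega_{\varepsilon} \leq i\partial\overline{\partial}(\psi+(\delta+\delta')\phi)$ together with (\ref{equation:1}) gives $\langle A_{\varepsilon,\delta+\delta'}\beta,\beta\rangle_{\omega_{\varepsilon}} \geq (n-1)\delta|\beta|^{2}_{\omega_{\varepsilon}}$, while $A^{-1}_{\varepsilon,\delta+\delta'}$ is bounded above on $\Omega$. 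Thus the $A^{-1}_{\varepsilon,\delta+\delta'}$-weighted seminorm is, on each $\Omega'$, both bounded above and bounded below by a constant multiple of the Euclidean $L^{2}$-norm. Consequently $\alpha_{j} \to \alpha$ in $L^{2}_{\mathrm{loc}}(\Omega)$, hence in the sense of distributions, while $u_{j} \rightharpoonup u$ weakly in $L^{2}_{\mathrm{loc}}(\Omega)$; passing to the limit in the identity $\langle u_{j}, \overline{\partial}^{*}\chi\rangle = \langle \alpha_{j}, \chi\rangle$ for every smooth compactly supported $(0,1)$-form $\chi$ yields $\overline{\partial}u = \alpha$.

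Finally, the desired estimate follows from the weak lower semicontinuity of the Hilbert-space norm:
\[
\int_{\Omega}|u|^{2}e^{\psi-(\delta-\delta')\phi}dV_{\omega_{\varepsilon}} \leq \liminf_{j\to\infty}\int_{\Omega}|u_{j}|^{2}e^{\psi-(\delta-\delta')\phi}dV_{\omega_{\varepsilon}} \leq C_{n,\delta}\int_{\Omega}\langle A^{-1}_{\varepsilon,\delta+\delta'}\alpha,\alpha\rangle_{\omega_{\varepsilon}}e^{\psi-(\delta-\delta')\phi}dV_{\omega_{\varepsilon}},
\]
where the last inequality uses the convergence of the right-hand sides established in the first paragraph. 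I expect the only point requiring care to be the two-sided local comparison between the $A^{-1}_{\varepsilon,\delta+\delta'}$-weighted seminorm and the Euclidean $L^{2}$-norm that is needed to upgrade seminorm convergence of the $\alpha_{j}$ to $L^{2}_{\mathrm{loc}}$ (hence distributional) convergence; this rests on the positivity lower bound for $A_{\varepsilon,\delta+\delta'}$ coming from (\ref{equation:1}) and on the upper boundedness of $A^{-1}_{\varepsilon,\delta+\delta'}$ already used in Lemma~\ref{lemma:1}, so no new idea is required. Everything else is the standard weak-compactness passage to the limit.
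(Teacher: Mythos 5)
Your proof is correct and follows essentially the same route as the paper's: apply Lemma~\ref{lemma:1} to each $\alpha_{j}$, extract a weakly convergent subsequence of the solutions $u_{j}$ in $L^{2}(\Omega, e^{\psi-(\delta-\delta')\phi},\omega_{\varepsilon})$, and pass to the limit using distributional convergence of $\alpha_{j}$ and weak lower semicontinuity of the norm. The only differences are cosmetic: you obtain the constant $C_{n,\delta}$ rather than the paper's $2C_{n,\delta}$ by using the full seminorm triangle inequality instead of the crude bound $\langle A^{-1}(a+b),a+b\rangle \leq 2\langle A^{-1}a,a\rangle + 2\langle A^{-1}b,b\rangle$, and you spell out the local comparison between the $A^{-1}_{\varepsilon,\delta+\delta'}$-weighted seminorm and the Euclidean $L^{2}$-norm that the paper leaves implicit when it asserts $\alpha_{j}\to\alpha$ in the sense of distributions.
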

\begin{proof}
By Lemma~\ref{lemma:1}, there exist $u_{j}$ ($j = 1, 2, \ldots$) such that $\overline{\partial} u_{j} =\alpha_{j}$ and 
\begin{align*}
& \int_{\Omega}|u_{j}|^{2}e^{\psi-(\delta-\delta') \phi}dV_{\omega_{\varepsilon}} 
\leq C_{n, \delta}\int_{\Omega}\langle A_{\varepsilon, \delta+\delta'}^{-1}\alpha_{j}, \alpha_{j} \rangle_{\omega_{\varepsilon}} e^{\psi -(\delta-\delta') \phi}dV_{\omega_{\varepsilon}} \\
\leq & 2C_{n, \delta}\left(\int_{\Omega}\langle A_{\varepsilon, \delta+\delta'}^{-1}\alpha, \alpha \rangle_{\omega_{\varepsilon}} e^{\psi -(\delta-\delta') \phi}dV_{\omega_{\varepsilon}} + \int_{\Omega}\langle A_{\varepsilon, \delta+\delta'}^{-1}(\alpha-\alpha_{j}), \alpha-\alpha_{j} \rangle_{\omega_{\varepsilon}} e^{\psi -(\delta-\delta') \phi}dV_{\omega_{\varepsilon}}\right).  
\end{align*}
Therefore we may choose a subsequence of $\{u_{j}\}_{j \in \mathbb{N}}$ converging weakly in $L^{2}(\Omega, e^{\psi - (\delta-\delta') \phi}, \omega_{\varepsilon})$ to $u$.  
Since $\alpha_{j} \to \alpha$ ($j \to \infty$) in the distribution sense, 
we have that $\overline{\partial} u =\alpha$ and 
\[
\int_{\Omega}|u|^{2}e^{\psi-(\delta-\delta') \phi}dV_{\omega_{\varepsilon}} \leq 2C_{n, \delta}\int_{\Omega}\langle A_{\varepsilon, \delta+\delta'}^{-1}\alpha, \alpha \rangle_{\omega_{\varepsilon}} e^{\psi -(\delta-\delta') \phi}dV_{\omega_{\varepsilon}}.  
\]
\end{proof}
Next, we construct a sequence which approximates $\alpha$.  
\begin{lemma}\label{lemma:3}
Let $\delta, \delta' >0$. 
Let $\alpha \in L^{2}_{0, 1}(\Omega, e^{\psi - (\delta-\delta') \phi}, \omega_{\varepsilon})$ such that $\overline{\partial} \alpha =0$.  
Assume that $n > 2 + \delta$ and that $\varepsilon \leq \delta^{-1}$.  
Then there exist $\overline{\partial}$-closed $(0, 1)$-forms $\alpha_{j} \in L^{2}_{0, 1}(\Omega, e^{\psi + \delta'\phi}, \omega_{\varepsilon})$ ($j = 1, 2, \ldots$) such that 
$\int_{\Omega} \langle A_{\varepsilon, \delta+\delta'}^{-1}(\alpha-\alpha_{j}), \alpha-\alpha_{j} \rangle_{\omega_{\varepsilon}} e^{\psi -(\delta-\delta') \phi}dV_{\omega_{\varepsilon}} \to 0$ when $j \to \infty$.  
\end{lemma}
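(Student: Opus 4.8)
The plan is to build the approximating forms by cutting $\alpha$ off away from $\partial\Omega$ and repairing the resulting failure of $\overline{\partial}$-closedness by a solution of a $\overline{\partial}$-equation estimated exactly as in Lemma~\ref{lemma:1}, only one bidegree higher; the shift of hypothesis from $n>1+\delta$ to $n>2+\delta$ is precisely the price of that extra degree.

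First I would fix cut-offs: for each $j$ choose $\rho_{j}\in C^{\infty}(\mathbb{R})$ with $\rho_{j}\equiv1$ on $(-\infty,j]$, $\rho_{j}\equiv0$ on $[2j,+\infty)$ and $|\rho_{j}'|\le 2/j$, and set $\chi_{j}=\rho_{j}(\phi)$. Since $\phi$ is exhaustive, $\chi_{j}$ is compactly supported in $\Omega$ and $\chi_{j}\equiv1$ on $\{\phi\le j\}$, and since $|\overline{\partial}\phi|_{\omega_{\varepsilon}}<1$ we have $|\overline{\partial}\chi_{j}|_{\omega_{\varepsilon}}\le 2/j$. Put $\beta_{j}=\overline{\partial}\chi_{j}\wedge\alpha$. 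Then $\beta_{j}$ is a $(0,2)$-form supported in $\{j\le\phi\le 2j\}$, it is $\overline{\partial}$-closed (because $\overline{\partial}\,\overline{\partial}\chi_{j}=0$ and $\overline{\partial}\alpha=0$), it belongs to $L^{2}_{0,2}(\Omega,e^{\psi+\delta'\phi},\omega_{\varepsilon})$ because it is compactly supported, and $|\beta_{j}|_{\omega_{\varepsilon}}\le (2/j)\,|\alpha|_{\omega_{\varepsilon}}$ pointwise.

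The main point is a $(0,2)$-analogue of Lemma~\ref{lemma:1}: for every $\overline{\partial}$-closed $\gamma\in L^{2}_{0,2}(\Omega,e^{\psi+\delta'\phi},\omega_{\varepsilon})$ there is $v\in L^{2}_{0,1}(\Omega,e^{\psi+\delta'\phi},\omega_{\varepsilon})$ with $\overline{\partial}v=\gamma$ and
\[
\int_{\Omega}|v|^{2}e^{\psi-(\delta-\delta')\phi}dV_{\omega_{\varepsilon}}\le C_{n,\delta}\int_{\Omega}\langle A_{\varepsilon,\delta+\delta'}^{-1}\gamma,\gamma\rangle_{\omega_{\varepsilon}}e^{\psi-(\delta-\delta')\phi}dV_{\omega_{\varepsilon}}.
\]
I would prove this by rerunning the proof of Lemma~\ref{lemma:1} with $(0,1)$-forms replaced by $(0,2)$-forms and functions by $(0,1)$-forms. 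Let $v$ be the solution of $\overline{\partial}v=\gamma$ of minimal norm in $L^{2}_{0,1}(\Omega,e^{\psi+\delta'\phi},\omega_{\varepsilon})$; it exists because $A_{\varepsilon,\delta'}$ is positive definite on $(0,2)$-forms, which is where $n\ge 3$ enters, and $A_{\varepsilon,\delta+\delta'}^{-1}$ is bounded above on $\Omega$. Then $v$ is orthogonal in $L^{2}_{0,1}(e^{\psi+\delta'\phi})$ to the $\overline{\partial}$-closed $(0,1)$-forms; since every $\overline{\partial}$-closed $(0,1)$-form $h$ in $L^{2}_{0,1}(e^{\psi+(\delta+\delta')\phi})$ is also one in $L^{2}_{0,1}(e^{\psi+\delta'\phi})$ and $\langle ve^{-\delta\phi},h\rangle_{\psi+(\delta+\delta')\phi}=\langle v,h\rangle_{\psi+\delta'\phi}$, the $(0,1)$-form $ve^{-\delta\phi}$ is the minimal solution in $L^{2}_{0,1}(e^{\psi+(\delta+\delta')\phi})$ of $\overline{\partial}(ve^{-\delta\phi})=(\gamma-\delta\,\overline{\partial}\phi\wedge v)e^{-\delta\phi}$. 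Applying (\ref{equation:2}) to it, and using $\langle A_{\varepsilon,\delta+\delta'}\eta,\eta\rangle_{\omega_{\varepsilon}}\ge (n-2)\delta\,|\eta|^{2}_{\omega_{\varepsilon}}$ for $(0,2)$-forms $\eta$ (from (\ref{equation:1}) and $\delta\omega_{\varepsilon}\le i\partial\overline{\partial}(\psi+(\delta+\delta')\phi)$) together with $|\overline{\partial}\phi\wedge v|_{\omega_{\varepsilon}}\le|\overline{\partial}\phi|_{\omega_{\varepsilon}}|v|_{\omega_{\varepsilon}}<|v|_{\omega_{\varepsilon}}$, the cross term is absorbed exactly as in Lemma~\ref{lemma:1}; the absorption now requires $(1+t)\delta/(n-2)<1$, i.e.\ $n>2+\delta$.

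Finally I would assemble. Applying the displayed estimate to $\gamma=\beta_{j}$ gives $v_{j}\in L^{2}_{0,1}(\Omega,e^{\psi+\delta'\phi},\omega_{\varepsilon})$ with $\overline{\partial}v_{j}=\beta_{j}$, and using $\langle A_{\varepsilon,\delta+\delta'}^{-1}\beta_{j},\beta_{j}\rangle_{\omega_{\varepsilon}}\le((n-2)\delta)^{-1}|\beta_{j}|^{2}_{\omega_{\varepsilon}}$ together with the support and size of $\beta_{j}$,
\[
\int_{\Omega}|v_{j}|^{2}e^{\psi-(\delta-\delta')\phi}dV_{\omega_{\varepsilon}}\le\frac{C'}{j^{2}}\int_{\{\phi\ge j\}}|\alpha|^{2}e^{\psi-(\delta-\delta')\phi}dV_{\omega_{\varepsilon}}\longrightarrow 0
\]
as $j\to\infty$, by dominated convergence since the integrand lies in $L^{1}$ and $\{\phi\ge j\}$ shrinks to $\emptyset$. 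Set $\alpha_{j}=\chi_{j}\alpha-v_{j}$; then $\alpha_{j}\in L^{2}_{0,1}(\Omega,e^{\psi+\delta'\phi},\omega_{\varepsilon})$ (as $\chi_{j}\alpha$ is compactly supported and $v_{j}$ lies in that space) and $\overline{\partial}\alpha_{j}=\beta_{j}-\beta_{j}=0$. Finally $\alpha-\alpha_{j}=(1-\chi_{j})\alpha+v_{j}$, and since $A_{\varepsilon,\delta+\delta'}^{-1}\le((n-1)\delta)^{-1}$ on $(0,1)$-forms, the quantity $\int_{\Omega}\langle A_{\varepsilon,\delta+\delta'}^{-1}(\alpha-\alpha_{j}),\alpha-\alpha_{j}\rangle_{\omega_{\varepsilon}}e^{\psi-(\delta-\delta')\phi}dV_{\omega_{\varepsilon}}$ is bounded by a constant times $\int_{\Omega}(|(1-\chi_{j})\alpha|^{2}+|v_{j}|^{2})e^{\psi-(\delta-\delta')\phi}dV_{\omega_{\varepsilon}}$, which tends to $0$ (the $(1-\chi_{j})\alpha$-term by dominated convergence, the $v_{j}$-term by the previous estimate). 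Thus $\{\alpha_{j}\}$ is the required sequence. The hard part is the degree-two $L^{2}$-estimate above; the two places in it that force $n\ge 3$ (solvability in $L^{2}_{0,1}(e^{\psi+\delta'\phi})$) and $n>2+\delta$ (the absorption) are exactly what distinguishes Lemma~\ref{lemma:3} from Lemma~\ref{lemma:1}.
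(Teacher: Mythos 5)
Your proposal is correct and follows essentially the same route as the paper: cut off $\alpha$ by a compactly supported function of $\phi$, and repair $\overline{\partial}$-closedness by solving a $(0,2)$-level $\overline{\partial}$-equation via the same twisted minimal-solution argument as Lemma~\ref{lemma:1}, with the eigenvalue bound $(n-2)\delta$ on $(0,2)$-forms accounting for the hypothesis $n>2+\delta$. The only differences from the paper's proof (transition region $[j,2j]$ versus $[j,j+2]$, and writing out the $(0,2)$-analogue of Lemma~\ref{lemma:1} explicitly rather than invoking it as ``analogous reasoning'') are cosmetic.
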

\begin{proof}
First, note that $\int_{\Omega} \langle A_{\varepsilon, \delta+\delta'}^{-1}\alpha, \alpha \rangle_{\omega_{\varepsilon}} e^{\psi -(\delta-\delta') \phi}dV_{\omega_{\varepsilon}} < + \infty$ since $\langle A_{\varepsilon, \delta+\delta'}^{-1}\alpha, \alpha \rangle_{\omega_{\varepsilon}} \leq \frac{1}{(n-1)\delta}|\alpha|^{2}_{\omega_{\varepsilon}}$ by the proof of Lemma~\ref{lemma:1}.  
Let $\chi \in C^{\infty}(\mathbb{R})$ such that $\chi(t) = 1$ for $t<0$, $\chi(t) = 0$ for $t > 2$ and $|\chi'| \leq 1$.  
Let $h_{j} = \chi(\phi - j) \in C^{\infty}_{0}(\Omega)$.   
Let $N_{1}$, resp.\! $N_{2}$, be the kernel space of linear, closed, densely defined operator $\overline{\partial}: L^{2}_{0, 1}(\Omega, e^{\psi + \delta'\phi}, \omega_{\varepsilon}) \to L^{2}_{0, 2}(\Omega, e^{\psi + \delta'\phi}, \omega_{\varepsilon})$, resp.\! $\overline{\partial}: L^{2}_{0, 1}(\Omega, e^{\psi + (\delta + \delta') \phi}, \omega_{\varepsilon}) \to L^{2}_{0, 2}(\Omega, e^{\psi + (\delta + \delta') \phi}, \omega_{\varepsilon})$.  
We have that $\overline{\partial}(h_{j}\alpha) \in L^{2}_{0, 1}(\Omega, e^{\psi + \delta'\phi}, \omega_{\varepsilon})$.  
By a reasoning analogous to that of the proof of Lemma~\ref{lemma:1}, 
there exists $\beta_{j} \in L^{2}_{0, 1}(\Omega, e^{\psi + \delta' \phi}, \omega_{\varepsilon}) \cap N_{1}^{\perp}$ 
such that 
$\overline{\partial} \beta_{j} = \overline{\partial}(h_{j} \alpha)$, 
$\beta_{j}e^{-\delta \phi} \in L^{2}_{0, 1}(\Omega, e^{\psi + (\delta + \delta')\phi},  \omega_{\varepsilon}) \cap N_{2}^{\perp}$ and 
$\overline{\partial}(\beta_{j} e^{-\delta \phi}) \in L^{2}_{0, 2}(\Omega, e^{\psi + (\delta+\delta') \phi}, \omega_{\varepsilon})$.  
By (\ref{equation:2}), we have that 
\begin{align*}
& \int_{\Omega}|\beta_{j}|^{2}_{\omega_{\varepsilon}}e^{\psi-(\delta-\delta') \phi} dV_{\omega_{\varepsilon}} 
= \int_{\Omega}|\beta_{j}e^{-\delta \phi}|_{\omega_{\varepsilon}}^{2}e^{\psi + (\delta+\delta') \phi}dV_{\omega_{\varepsilon}} \\ 
\leq & \int_{\Omega} \langle A^{-1}_{\varepsilon, \delta+\delta'}(\overline{\partial}\beta_{j} - \delta \overline{\partial}\phi \wedge \beta_{j}), \overline{\partial}\beta_{j} - \delta \overline{\partial}\phi \wedge \beta_{j}\rangle_{\omega_{\varepsilon}} e^{\psi - (\delta-\delta') \phi}dV_{\omega_{\varepsilon}} \\%
\leq & \left(1 + \frac{1}{t}\right) \int_{\Omega} \langle A^{-1}_{\varepsilon, \delta+\delta'}\overline{\partial}\beta_{j}, \overline{\partial}\beta_{j} \rangle_{\omega_{\varepsilon}} e^{\psi - (\delta-\delta') \phi}dV_{\omega_{\varepsilon}} \\
& + (1 + t) \delta^{2}\int_{\Omega} \langle A^{-1}_{\varepsilon, \delta+\delta'}
(\overline{\partial}\phi \wedge \beta_{j}), \overline{\partial}\phi \wedge \beta_{j} \rangle_{\omega_{\varepsilon}} e^{\psi - (\delta-\delta') \phi}dV_{\omega_{\varepsilon}} 
\end{align*}  
for every $t > 0$.  
Here we regard $A_{\varepsilon, \delta+\delta'}$ as an endomorphism of the space of $(0, 2)$-forms.   
Since $\varepsilon < \delta^{-1}$, we have that $\delta \omega_{\varepsilon} = \delta i  \partial\overline{\partial}(\varepsilon\psi + \phi) \leq i \partial \overline{\partial} (\psi + (\delta+\delta') \phi)$.  
By (\ref{equation:1}), 
it follows that $\langle A_{\varepsilon, \delta+\delta'} \gamma, \gamma \rangle_{\omega_{\varepsilon}} \geq (n-2) \delta |\gamma|^{2}_{\omega_{\varepsilon}}$ for any $(0, 2)$-form $\gamma$.  
Hence $\langle A^{-1}_{\varepsilon, \delta+\delta'} (\overline{\partial} \phi \wedge \beta_{j}), \overline{\partial}\phi \wedge \beta_{j} \rangle_{\omega_{\varepsilon}} \leq \frac{1}{(n-2)\delta} |\overline{\partial}\phi \wedge \beta_{j}|^{2}_{\omega_{\epsilon}} \leq \frac{1}{(n-2)\delta} |\overline{\partial}\phi|^{2}_{\omega_{\varepsilon}}|\beta_{j}|_{\omega_{\varepsilon}}^{2} < \frac{1}{(n-2)\delta} |\beta_{j}|_{\omega_{\varepsilon}}^{2}$.  
By choosing $t$ so small, there exists a constant $C_{1}$ which depends only on $n$ and $\delta$ such that $(1 + t) \frac{\delta}{n-2} < C_{1}< 1$ since $n > 2 + \delta$.  
Then we have that 
\[
\int_{\Omega} |\beta_{j}|^{2}_{\omega_{\varepsilon}}e^{\psi - (\delta-\delta') \phi} dV_{\omega_{\varepsilon}} \leq C_{2} \int_{\Omega} \langle A^{-1}_{\varepsilon, \delta+\delta'} \overline{\partial}\beta_{j}, \overline{\partial} \beta_{j} \rangle_{\omega_{\varepsilon}} e^{\psi - (\delta-\delta') \phi} dV_{\omega_{\varepsilon}}
\]
where $C_{2} = (1-C_{1})^{-1}\left(1 + \frac{1}{t} \right)$ which depends only on $n$ and $\delta$.  
It follows that 
\begin{align*}
& \int_{\Omega} \langle A^{-1}_{\varepsilon, \delta+\delta'} \overline{\partial}\beta_{j}, \overline{\partial} \beta_{j} \rangle_{\omega_{\varepsilon}} e^{\psi - (\delta-\delta') \phi} dV_{\omega_{\varepsilon}} \leq \frac{1}{(n-2)\delta} \int_{\Omega} |\overline{\partial}\beta_{j}|^{2}_{\omega_{\varepsilon}} e^{\psi - (\delta-\delta') \phi} dV_{\omega_{\varepsilon}} \\
= & \frac{1}{(n-2)\delta} \int_{\Omega} |\overline{\partial}h_{j}\wedge \alpha|^{2}_{\omega_{\varepsilon}} e^{\psi - (\delta-\delta') \phi} dV_{\omega_{\varepsilon}}
\leq \frac{1}{(n-2)\delta} \int_{\{j \leq \phi \leq j+2\}} |\alpha|^{2}_{\omega_{\varepsilon}}e^{\psi - (\delta-\delta') \phi} dV_{\omega_{\varepsilon}}.  
\end{align*} 
Because $\alpha \in L^{2}_{0, 1}(\Omega, e^{\psi - (\delta-\delta') \phi}, \omega_{\varepsilon})$,  Lebesgue's dominated convergence theorem shows that 
the last term of the above inequality tends to $0$ when $j$ tends to $+\infty$.  
By the proof of Lemma~\ref{lemma:1}, we have that $\langle A^{-1}_{\varepsilon, \delta+\delta'} \beta_{j}, \beta_{j}\rangle_{\omega_{\varepsilon}} \leq \frac{1}{\delta (n-1)}|\beta_{j}|^{2}_{\omega_{\varepsilon}}$.  
Finally, we have that 
\[
\lim_{j \to \infty}\int_{\Omega} \langle A^{-1}_{\varepsilon, \delta+\delta'} \beta_{j}, \beta_{j} \rangle_{\omega_{\varepsilon}} e^{\psi - (\delta-\delta') \phi} dV_{\omega_{\varepsilon}} \leq 
\lim_{j \to \infty} \frac{1}{\delta(n-1)}\int_{\Omega} |\beta_{j}|^{2}_{\omega_{\varepsilon}} e^{\psi - (\delta-\delta') \phi}dV_{\omega_{\varepsilon}} = 0.  
\]
Let $\alpha_{j} = h_{j} \alpha - \beta_{j}$.  
Then $\alpha_{j} \in L^{2}_{0, 1}(\Omega, e^{\psi+\delta'\phi}, \omega_{\varepsilon})$, 
$\overline{\partial}\alpha_{j} = 0$, and 
\begin{align*}
& \lim_{j \to \infty} \int_{\Omega} \langle A_{\varepsilon, \delta+\delta'}^{-1}(\alpha-\alpha_{j}), \alpha-\alpha_{j} \rangle_{\omega_{\varepsilon}} e^{\psi -(\delta-\delta') \phi}dV_{\omega_{\varepsilon}} \\
\leq & \lim_{j \to \infty} 2\int_{\Omega} \left((1-h_{j})^{2} \langle A_{\varepsilon, \delta+\delta'}^{-1}\alpha, \alpha \rangle_{\omega_{\varepsilon}} + \langle A^{-1}_{\varepsilon, \delta+\delta'} \beta_{j}, \beta_{j} \rangle_{\omega_{\varepsilon}}\right) e^{\psi - (\delta-\delta') \phi}  dV_{\omega_{\varepsilon}} = 0 
\end{align*}
by Lebesgue's dominated convergence theorem.  
\end{proof}

\begin{lemma}\label{lemma:4}
Let $\delta, \delta' >0$.  
Let $k < n-1$.  
Let $\alpha$ be a smooth $(0, 1)$-form on $\Omega$ such that $\mathrm{supp}\, \alpha \subset \Omega \setminus \mathrm{supp}\, (i\partial \overline{\partial}\varphi)^{k}$.  
Then $\langle A^{-1}_{\varepsilon, \delta+\delta'}\alpha, \alpha \rangle_{\omega_{\varepsilon}} \leq (n-k-1)^{-1} |\alpha|_{i\partial \overline{\partial} \psi}^{2}$.  
\end{lemma}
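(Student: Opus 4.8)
The plan is to reduce the estimate to a pointwise inequality which I would prove in a coframe that simultaneously diagonalizes $i\partial\overline{\partial}\psi$ and $i\partial\overline{\partial}\phi$ (hence also $\omega_{\varepsilon}$ and $i\partial\overline{\partial}(\psi+(\delta+\delta')\phi)$); the decisive input is that $i\partial\overline{\partial}\phi$ has rank at most $k$ on $\mathrm{supp}\,\alpha$. Indeed, the inequality is trivial wherever $\alpha$ vanishes, so it suffices to check it at each $x\in\mathrm{supp}\,\alpha$; such an $x$ lies in $\Omega\setminus\mathrm{supp}\,(i\partial\overline{\partial}\varphi)^{k}$, so $(i\partial\overline{\partial}\varphi)^{k}\equiv 0$ near $x$, and since $i\partial\overline{\partial}\varphi\ge 0$ this forces $\mathrm{rank}\,(i\partial\overline{\partial}\varphi)(x)\le k-1$. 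Because $\phi=-\log(-\varphi)$,
\[
i\partial\overline{\partial}\phi=\frac{i\partial\overline{\partial}\varphi}{-\varphi}+\frac{i\partial\varphi\wedge\overline{\partial}\varphi}{\varphi^{2}},
\]
a sum of a positive semidefinite form of rank $\le k-1$ at $x$ and a positive semidefinite form of rank $\le 1$; hence $i\partial\overline{\partial}\phi$ is positive semidefinite of rank $\le k$ at $x$.

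Fix such an $x$, and write $G=i\partial\overline{\partial}\psi$ (the Euclidean metric) and $Q=i\partial\overline{\partial}\phi$. I would choose a coframe $\sigma^{1},\dots,\sigma^{n}$ orthonormal for $G$ and diagonalizing $Q$, say $Q=\sum_{a}q_{a}\,i\sigma^{a}\wedge\overline{\sigma^{a}}$ with $q_{a}\ge 0$ and at most $k$ of the $q_{a}$ nonzero. Then $\omega_{\varepsilon}=\varepsilon G+Q=\sum_{a}(\varepsilon+q_{a})\,i\sigma^{a}\wedge\overline{\sigma^{a}}$ and $i\partial\overline{\partial}(\psi+(\delta+\delta')\phi)=G+(\delta+\delta')Q=\sum_{a}(1+(\delta+\delta')q_{a})\,i\sigma^{a}\wedge\overline{\sigma^{a}}$. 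Rescaling to $\tau^{a}=\sqrt{\varepsilon+q_{a}}\,\sigma^{a}$ yields an $\omega_{\varepsilon}$-orthonormal coframe in which
\[
i\partial\overline{\partial}(\psi+(\delta+\delta')\phi)=\sum_{a}\lambda_{a}\,i\tau^{a}\wedge\overline{\tau^{a}},\qquad\lambda_{a}=\frac{1+(\delta+\delta')q_{a}}{\varepsilon+q_{a}},
\]
while $i\partial\overline{\partial}\psi=\sum_{a}(\varepsilon+q_{a})^{-1}\,i\tau^{a}\wedge\overline{\tau^{a}}$, so $|\alpha|_{i\partial\overline{\partial}\psi}^{2}=\sum_{a}(\varepsilon+q_{a})|\alpha_{a}|^{2}$ when $\alpha=\sum_{a}\alpha_{a}\overline{\tau^{a}}$. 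By (\ref{equation:1}), $A_{\varepsilon,\delta+\delta'}$ is diagonal on $(0,1)$-forms in this coframe with eigenvalue $\sum_{l\neq j}\lambda_{l}$ on $\overline{\tau^{j}}$, so $\langle A_{\varepsilon,\delta+\delta'}^{-1}\alpha,\alpha\rangle_{\omega_{\varepsilon}}=\sum_{j}|\alpha_{j}|^{2}\bigl(\sum_{l\neq j}\lambda_{l}\bigr)^{-1}$.

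To finish, note that for each $j$ at least $n-k-1$ of the indices $l\neq j$ have $q_{l}=0$, and for each such $l$ one has $\lambda_{l}=1/\varepsilon$; as all $\lambda_{l}>0$, this gives $\sum_{l\neq j}\lambda_{l}\ge(n-k-1)/\varepsilon\ge(n-k-1)/(\varepsilon+q_{j})$, i.e. $\bigl(\sum_{l\neq j}\lambda_{l}\bigr)^{-1}\le(\varepsilon+q_{j})/(n-k-1)$. Summing against $|\alpha_{j}|^{2}$ gives
\[
\langle A_{\varepsilon,\delta+\delta'}^{-1}\alpha,\alpha\rangle_{\omega_{\varepsilon}}\le\frac{1}{n-k-1}\sum_{j}(\varepsilon+q_{j})|\alpha_{j}|^{2}=(n-k-1)^{-1}|\alpha|_{i\partial\overline{\partial}\psi}^{2},
\]
which is the assertion. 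The argument is entirely pointwise linear algebra, so I do not expect a genuine obstacle; the one step that must be right — and the only place the hypothesis on $\mathrm{supp}\,\alpha$ enters — is the rank bound $\mathrm{rank}\,(i\partial\overline{\partial}\phi)(x)\le k$, which is exactly what leaves at least $n-k-1$ "Euclidean directions" $\lambda_{l}=1/\varepsilon$ in the sum $\sum_{l\neq j}\lambda_{l}$.
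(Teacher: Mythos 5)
Your proof is correct and follows essentially the same route as the paper: reduce to a pointwise statement at points of $\mathrm{supp}\,\alpha$, use the support hypothesis to bound the rank of $i\partial\overline{\partial}\phi$ by $k$, simultaneously diagonalize $i\partial\overline{\partial}\psi$ and $i\partial\overline{\partial}\phi$, rescale to an $\omega_{\varepsilon}$-orthonormal coframe, and read off the eigenvalues of $A_{\varepsilon,\delta+\delta'}$ from (\ref{equation:1}). The only cosmetic differences are that the paper first replaces $A^{-1}_{\varepsilon,\delta+\delta'}$ by $A^{-1}_{\varepsilon,\delta}$ and treats the indices $j\le k$ and $l\ge k+1$ separately (getting $\varepsilon/(n-k)$ and $\varepsilon/(n-k-1)$ respectively), whereas you use the uniform bound $\varepsilon/(n-k-1)\le(\varepsilon+q_{j})/(n-k-1)$ for all indices; both yield the same conclusion.
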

\begin{proof}
We have that $A^{-1}_{\varepsilon, \delta+\delta'} \leq A^{-1}_{\varepsilon, \delta}$.  
Hence it is enough to prove $\langle A^{-1}_{\varepsilon, \delta}\alpha, \alpha \rangle_{\omega_{\varepsilon}} \leq (n-k-1)^{-1} |\alpha|_{i\partial \overline{\partial} \psi}^{2}$. 
Because $i\partial \overline{\partial} \phi = i\frac{\partial \overline{\partial} \varphi}{-\varphi} + i \frac{\partial \varphi \wedge \overline{\partial}\varphi}{\varphi^{2}}$, 
we have that $\mathrm{supp}\,\alpha \subset \Omega \setminus \mathrm{supp}\,(i\partial \overline{\partial}\phi)^{k+1}$.  
Let $x \in \Omega \setminus \mathrm{supp}\,(i\partial \overline{\partial}\phi)^{k+1}$.  
At $x$, we choose an orthonormal basis $\theta_{1}, \ldots, \theta_{n}$ for the holomorphic cotangent bundle with respect to $i \partial \overline{\partial} \psi$ such that $i \partial \overline{\partial} \phi = i \lambda_{1} \theta_{1}\wedge \overline{\theta}_{1} + \cdots + i\lambda_{k}\theta_{k}\wedge\overline{\theta_{k}}$ where $\lambda_{j} \geq 0$ for $1 \leq j \leq k$.  
Then $\omega_{\varepsilon} = i \partial \overline{\partial}(\varepsilon \psi + \phi) = \sum_{j= 1}^{k}i(\varepsilon + \lambda_{j})\theta_{j} \wedge \overline{\theta_{j}} + \sum_{l=k+1}^{n} i\varepsilon \theta_{l} \wedge \overline{\theta_{l}}$ and 
$i\partial \overline{\partial} (\psi + \delta \phi) = \sum_{j = 1}^{k} i (1 + \delta \lambda_{j})\theta_{j} \wedge \overline{\theta_{j}} + \sum_{l = k+1}^{n} i \theta_{l} \wedge \overline{\theta_{l}}$.  
Let $\sigma_{j} = \sqrt{\varepsilon + \lambda_{j}} \theta_{j}$ for $1 \leq j \leq k$ and let $\sigma_{l} = \sqrt{\varepsilon} \theta_{l}$ for $k +1 \leq  l \leq n$.  
Then $\omega_{\varepsilon} = \sum_{j = 1}^{n} i \sigma_{j} \wedge \overline{\sigma_{j}}$ and $i\partial\overline{\partial}(\psi + \delta \phi) = \sum_{j = 1}^{k} i \frac{1+\delta \lambda_{j}}{\varepsilon + \lambda_{j}}\sigma_{j} \wedge \overline{\sigma_{j}} + \sum_{l = k+1}^{n} i \frac{1}{\varepsilon}\sigma_{l} \wedge \overline{\sigma_{l}}$.  
By (\ref{equation:1}), it follows that 
\[
\langle A^{-1}_{\varepsilon, \delta} \overline{\sigma_{j}}, \overline{\sigma_{s}}\rangle_{\omega_{\varepsilon}} = \langle \biggl( \sum_{\substack{1 \leq m \leq k \\ m \neq j}} \frac{1 + \delta \lambda_{m}}{\varepsilon + \lambda_{m}} + \frac{n-k}{\varepsilon} \biggl)^{-1}\overline{\sigma_{j}}, \overline{\sigma_{s}}\rangle_{\omega_{\varepsilon}} \leq \frac{\varepsilon}{n-k}\delta_{j s} 
\]
for $j \leq k$, $1 \leq s \leq n$, and 
\[
\langle A^{-1}_{\varepsilon, \delta} \overline{\sigma_{l}}, \overline{\sigma_s}\rangle_{\omega_{\varepsilon}}= \langle \biggl( \sum_{1 \leq m \leq k} \frac{1 + \delta \lambda_{m}}{\varepsilon + \lambda_{m}} + \frac{n-k-1}{\varepsilon} \biggl)^{-1}\overline{\sigma_{l}}, \overline{\sigma_{s}}\rangle_{\omega_{\varepsilon}} \leq \frac{\varepsilon}{n-k-1} \delta_{l s} 
\]
for $l \geq k+1$, $1 \leq s \leq n$.  
Here $\delta_{j s}$, $\delta_{l s}$ are the Kronecker delta.  
Write $\alpha = \sum_{j = 1}^{n} \alpha_{j} \overline{\theta_{j}}= \sum_{j = 1}^{k} \frac{\alpha_{j}}{\sqrt{\varepsilon + \lambda_{j}}} \overline{\sigma_{j}} + \sum_{l = k+1}^{n} \frac{\alpha_{l}}{\sqrt{\varepsilon}} \overline{\sigma_{l}}$.  
We have that 
\[
\langle A^{-1}_{\varepsilon, \delta}\alpha, \alpha \rangle_{\omega_{\varepsilon}} 
\leq \sum_{ j = 1}^{k}\frac{|\alpha_{j}|^{2}}{\varepsilon + \lambda_{j}}\frac{\varepsilon}{n-k} + \sum_{l = k +1}^{n} \frac{|\alpha_{l}|^{2}}{\varepsilon}\frac{\varepsilon}{n-k-1} \leq 
(n-k-1)^{-1} |\alpha|_{i\partial \overline{\partial} \psi}^{2}. 
\]
\end{proof}

\begin{lemma}\label{lemma:5}
Assume that $\varphi \in C^{\infty}(\overline{\Omega})$ and that $d\varphi \neq 0$ on $\partial \Omega$.  
Let $p \in \partial\Omega$ and let $1 \leq k \leq n$ be an integer.  
Assume that $(i\partial \overline{\partial} \varphi)^{k} =0$ in a neighborhood of $p$.  
If $\delta > k$, then $e^{\psi - \delta \phi}dV_{\omega_{\varepsilon}}$ is integrable around $p$ in $\Omega$.  
\end{lemma}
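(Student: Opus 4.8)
The plan is to bound the density of $e^{\psi-\delta\phi}\,dV_{\omega_\varepsilon}$ with respect to Lebesgue measure $dV$ by a constant multiple of $(-\varphi)^{\delta-k-1}$ near $p$, and then to reduce the integrability question to a one-variable computation using $\varphi$ itself as a boundary coordinate.

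First, since $\phi=-\log(-\varphi)$ we have $e^{-\delta\phi}=(-\varphi)^{\delta}$, and $e^{\psi}$ is bounded on $\overline{\Omega}$, so it suffices to control $dV_{\omega_\varepsilon}$. As in the proof of Lemma~\ref{lemma:4}, write
\[
\omega_{\varepsilon}=\varepsilon\,i\partial\overline{\partial}\psi+\frac{i\partial\overline{\partial}\varphi}{-\varphi}+\frac{i\partial\varphi\wedge\overline{\partial}\varphi}{\varphi^{2}} .
\]
Expanding $\omega_{\varepsilon}^{n}$ by the multinomial formula (the three summands are $(1,1)$-forms, hence commute under wedge) yields a finite sum of terms of the form
\[
c_{a,b,c}\,\varepsilon^{a}\,(-\varphi)^{-b-2c}\,(i\partial\overline{\partial}\psi)^{a}\wedge(i\partial\overline{\partial}\varphi)^{b}\wedge(i\partial\varphi\wedge\overline{\partial}\varphi)^{c}
\]
with $a+b+c=n$. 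Now $(i\partial\varphi\wedge\overline{\partial}\varphi)^{2}=0$, so only $c\in\{0,1\}$ contributes, and $(i\partial\overline{\partial}\varphi)^{b}=0$ on a neighbourhood of $p$ for every $b\ge k$, so only $b\le k-1$ contributes. Hence, near $p$, each nonzero term carries the factor $(-\varphi)^{-b-2c}$ with $b+2c\le(k-1)+2=k+1$, while the wedge products of the forms $i\partial\overline{\partial}\psi$, $i\partial\overline{\partial}\varphi$, $i\partial\varphi\wedge\overline{\partial}\varphi$ have densities (against $dV$) bounded near $p$ because $\varphi$ and $\psi$ are smooth up to $\overline{\Omega}$. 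Since $-\varphi$ is small near $p$, this gives $dV_{\omega_\varepsilon}\le C(-\varphi)^{-(k+1)}\,dV$ on a neighbourhood of $p$ in $\Omega$, with $C$ depending only on $n$, $k$, $\varepsilon$, $\varphi$ and $\psi$; therefore $e^{\psi-\delta\phi}\,dV_{\omega_\varepsilon}\le C'(-\varphi)^{\delta-k-1}\,dV$ there.

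It remains to see that $(-\varphi)^{\delta-k-1}$ is $dV$-integrable on $\Omega$ near $p$. Since $d\varphi\neq 0$ on $\partial\Omega$, after shrinking one may use $\varphi$ as one of $2n$ real coordinates $(\varphi,y_{1},\dots,y_{2n-1})$ on a neighbourhood $U$ of $p$, so that $U\cap\Omega=\{\varphi<0\}$ and $dV=J\,d\varphi\,dy$ with $J$ bounded. Substituting $t=-\varphi$,
\[
\int_{U\cap\Omega}(-\varphi)^{\delta-k-1}\,dV\le C''\int_{0}^{\rho}t^{\delta-k-1}\,dt ,
\]
which is finite precisely because $\delta-k-1>-1$, i.e. $\delta>k$. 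The only step requiring care is the expansion: the point is that the rank-one identity $(i\partial\varphi\wedge\overline{\partial}\varphi)^{2}=0$, combined with the hypothesis $(i\partial\overline{\partial}\varphi)^{k}=0$, caps the order of the pole of $dV_{\omega_\varepsilon}$ along $\partial\Omega$ near $p$ at $k+1$, and that the remaining wedge factors extend smoothly to $\overline{\Omega}$; everything else is the elementary integral above.
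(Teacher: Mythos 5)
Your proof is correct and follows essentially the same route as the paper: the same three-term decomposition of $\omega_{\varepsilon}$, the same bound $e^{\psi-\delta\phi}\,dV_{\omega_{\varepsilon}}\le C(-\varphi)^{\delta-k-1}\,dV$ near $p$, and the same reduction to $\int_0^1 t^{\delta-k-1}\,dt<\infty$ via a coordinate system in which $\varphi$ is one of the coordinates. Your multinomial expansion with the observations $(i\partial\varphi\wedge\overline{\partial}\varphi)^2=0$ and $(i\partial\overline{\partial}\varphi)^b=0$ for $b\ge k$ simply spells out the justification that the paper leaves implicit for its inequality $dV_{\omega_{\varepsilon}}\le C_1(-\varphi)^{-k-1}(i\partial\overline{\partial}|z|^2)^n$.
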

\begin{proof}
Let $C_{1}, C_{2},\ldots$ be sufficiently large positive constants.  
Since $\omega_{\varepsilon} = i \varepsilon \partial \overline{\partial} \psi + i \frac{\partial \overline{\partial}\varphi}{-\varphi} + i \frac{\partial \varphi \wedge \overline{\partial}\varphi}{\varphi^{2}}$, 
we have that 
$dV_{\omega_{\varepsilon}} = (n!)^{-1}\omega_{\varepsilon}^{n} \leq C_{1} (-\varphi)^{-k-1} (i\partial \overline{\partial}|z|^{2})^{n}$ 
and $e^{\psi - \delta \phi} dV_{\omega_{\varepsilon}} \leq C_{2} (- \varphi)^{\delta - k -1}(i \partial \overline{\partial}|z|^{2})$ around $p$.   
Let $U$ be a small neighborhood of $p$.  
There exists a local coordinate system $(x_{1}, \ldots, x_{2n})$ on $U$ such that $\varphi = x_{1}$.  
It follows that 
\[
\int_{\Omega \cap U} e^{\psi - \delta \phi}dV_{\omega_{\varepsilon}} \leq C_{3} \int_{\Omega \cap U} x_{1}^{\delta - k -1}dx_{1}\cdots dx_{2n} \leq C_{4}\int_{0}^{1} x_{1}^{\delta - k - 1} dx_{1} < + \infty 
\]
since $\delta > k$.  
\end{proof}

\begin{proposition}\label{proposition:1}
Let $\Omega \subset \mathbb{C}^{n}$ ($n \geq 4$) be a bounded hyperconvex domain 
and let $\psi$ be a smooth strongly plurisubharmonic function on a neighborhood of $\overline{\Omega}$.  
Let $\varphi \in C^{\infty}(\overline{\Omega})$ such that 
$\varphi$ is negative plurisubharmonic on $\Omega$, $\varphi(z) \to 0$ when $z \to \partial \Omega$, and $d\varphi \neq 0$ on $\partial \Omega$.  
Let $\alpha$ be a smooth $(0, 1)$-form defined on an open neighborhood of $\overline{\Omega}$ such that $\overline{\partial}\alpha = 0$ in $\Omega$ and 
$\mathrm{supp}\, \alpha \subset \overline{\Omega} \setminus \mathrm{supp}\, (i \partial \overline{\partial}\varphi)^{n-3}$ in $\overline{\Omega}$.   
Let $\delta$ be a positive constant such that $n-3 < \delta < n-2$.  
Then there exists $u \in C^{\infty}(\Omega)$ such that $\overline{\partial}u = \alpha$ and 
\[
\int_{\Omega}|u|^{2} e^{\psi - \delta \phi} dV_{\omega_{\varepsilon}} \leq C_{n, \delta} \int_{\Omega} |\alpha|^{2}_{i\partial \overline{\partial}\psi} e^{\psi - \delta \phi}dV_{\omega_{\varepsilon}} < +\infty  
\]
for sufficiently small $\varepsilon>0$. 
Here $C_{n, \delta}$ is a positive constant which depends only on $n$ and $\delta$.   
\end{proposition}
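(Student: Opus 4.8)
The plan is to deduce the statement from the abstract solvability results proved above: Lemma~\ref{lemma:4} supplies the pointwise control of $\langle A^{-1}_{\varepsilon,\cdot}\alpha,\alpha\rangle_{\omega_{\varepsilon}}$ coming from the support condition on $\alpha$, Lemma~\ref{lemma:5} supplies the integrability of the weight near $\partial\Omega$, and Lemma~\ref{lemma:3} together with Lemma~\ref{lemma:2} then produces the solution $u$. Concretely, I first fix $s = (n-2-\delta)/2$, which is positive since $\delta < n-2$, and apply Lemmas~\ref{lemma:2} and~\ref{lemma:3} with $\delta+s$ in the role of their ``$\delta$'' and $s$ in the role of their ``$\delta'$''. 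Then the weight $e^{\psi-((\delta+s)-s)\phi}$ appearing there is exactly $e^{\psi-\delta\phi}$, the relevant operator is $A_{\varepsilon,\delta+2s}$, the hypotheses $n>2+(\delta+s)$ and $n>1+(\delta+s)$ both hold because $\delta+s<n-2$, and I ask moreover that $\varepsilon\le(\delta+s)^{-1}$, which is the meaning of ``$\varepsilon$ sufficiently small'' here.

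Next I verify the pointwise estimate and the finiteness. Since $\mathrm{supp}\,\alpha\subset\overline{\Omega}\setminus\mathrm{supp}\,(i\partial\overline{\partial}\varphi)^{n-3}$ and $1\le n-3<n-1$ (this is where $n\ge4$ enters), Lemma~\ref{lemma:4} with $k=n-3$ gives $\langle A^{-1}_{\varepsilon,\delta+2s}\alpha,\alpha\rangle_{\omega_{\varepsilon}}\le\tfrac12|\alpha|^2_{i\partial\overline{\partial}\psi}$ on $\Omega$. Because $\alpha$ is smooth on a neighborhood of $\overline{\Omega}$ and $i\partial\overline{\partial}\psi$ is a fixed smooth metric there, $|\alpha|^2_{i\partial\overline{\partial}\psi}$ is bounded on $\overline{\Omega}$; and since $\omega_{\varepsilon}\ge\varepsilon\,i\partial\overline{\partial}\psi$, one has $|\alpha|^2_{\omega_{\varepsilon}}\le\varepsilon^{-1}|\alpha|^2_{i\partial\overline{\partial}\psi}$. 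The compact set $\mathrm{supp}\,\alpha\cap\partial\Omega$ lies in the relatively open subset of $\overline{\Omega}$ on which $(i\partial\overline{\partial}\varphi)^{n-3}$ vanishes, hence is covered by finitely many neighborhoods $U_1,\dots,U_N$ of boundary points with $(i\partial\overline{\partial}\varphi)^{n-3}=0$ on each $U_i$; by Lemma~\ref{lemma:5} (using $\delta>n-3$, $\varphi\in C^{\infty}(\overline{\Omega})$ and $d\varphi\ne0$ on $\partial\Omega$) the measure $e^{\psi-\delta\phi}dV_{\omega_{\varepsilon}}$ is integrable on each $U_i\cap\Omega$, while on the compact subset $\mathrm{supp}\,\alpha\setminus\bigcup_iU_i$ of $\Omega$ it is obviously finite. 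Therefore $\int_{\Omega}|\alpha|^2_{i\partial\overline{\partial}\psi}e^{\psi-\delta\phi}dV_{\omega_{\varepsilon}}<+\infty$, whence both $\int_{\Omega}\langle A^{-1}_{\varepsilon,\delta+2s}\alpha,\alpha\rangle_{\omega_{\varepsilon}}e^{\psi-\delta\phi}dV_{\omega_{\varepsilon}}<+\infty$ and $\alpha\in L^{2}_{0,1}(\Omega,e^{\psi-\delta\phi},\omega_{\varepsilon})$.

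It then remains to assemble the solution and check its regularity. Lemma~\ref{lemma:3} yields $\overline{\partial}$-closed forms $\alpha_j\in L^{2}_{0,1}(\Omega,e^{\psi+s\phi},\omega_{\varepsilon})$ with $\int_{\Omega}\langle A^{-1}_{\varepsilon,\delta+2s}(\alpha-\alpha_j),\alpha-\alpha_j\rangle_{\omega_{\varepsilon}}e^{\psi-\delta\phi}dV_{\omega_{\varepsilon}}\to0$, and feeding these into Lemma~\ref{lemma:2} produces $u$ with $\overline{\partial}u=\alpha$ and $\int_{\Omega}|u|^2e^{\psi-\delta\phi}dV_{\omega_{\varepsilon}}\le C_{n,\delta+s}\int_{\Omega}\langle A^{-1}_{\varepsilon,\delta+2s}\alpha,\alpha\rangle_{\omega_{\varepsilon}}e^{\psi-\delta\phi}dV_{\omega_{\varepsilon}}$. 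Combining with the bound $\langle A^{-1}_{\varepsilon,\delta+2s}\alpha,\alpha\rangle_{\omega_{\varepsilon}}\le\tfrac12|\alpha|^2_{i\partial\overline{\partial}\psi}$ from Lemma~\ref{lemma:4} and observing that $s=(n-2-\delta)/2$ depends only on $n$ and $\delta$, one obtains the asserted inequality with a constant $C_{n,\delta}$. Finally, $u$ is a distributional solution of $\overline{\partial}u=\alpha$ with $\alpha$ smooth on $\Omega$, so interior elliptic regularity of $\overline{\partial}$ gives $u\in C^{\infty}(\Omega)$.

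The step I expect to be the crux is the finiteness of the right-hand side; apart from this the argument is essentially bookkeeping with Lemmas~\ref{lemma:1}--\ref{lemma:3}. On $\mathrm{supp}\,\alpha$ the density of $dV_{\omega_{\varepsilon}}$ is $O\big((-\varphi)^{-(n-2)}\big)$ precisely because $(i\partial\overline{\partial}\varphi)^{n-3}$ vanishes there, while the factor $e^{-\delta\phi}=(-\varphi)^{\delta}$ is what makes the product integrable up to $\partial\Omega$; this forces $\delta>n-3$, and together with the constraint $n>2+\delta$ needed for the Donnelly--Fefferman/Berndtsson-type estimate it gives exactly $n-3<\delta<n-2$, which is also why the threshold is $(i\partial\overline{\partial}\varphi)^{n-3}$ rather than a higher power.
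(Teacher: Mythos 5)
Your proposal is correct and follows essentially the same route as the paper: shift the weight parameters (your $\delta+s$, $s$ playing the role of the paper's $\delta''$, $\delta'$), feed Lemma~\ref{lemma:3} into Lemma~\ref{lemma:2}, control $\langle A^{-1}\alpha,\alpha\rangle_{\omega_{\varepsilon}}$ by Lemma~\ref{lemma:4} with $k=n-3$, and get finiteness of the right-hand side from Lemma~\ref{lemma:5} with $\delta>n-3$. Your covering argument for the integrability near $\partial\Omega$ just makes explicit a step the paper leaves implicit, so there is nothing further to add.
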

\begin{proof}
Since $|\alpha|^{2}_{\omega_{\varepsilon}} \leq |\alpha|^{2}_{\varepsilon i\partial\overline{\partial}\psi}$, the norm $|\alpha|^{2}_{\omega_{\varepsilon}}$ is bounded from above in $\Omega$.  
Then $|\alpha|^{2}_{\omega_{\varepsilon}}e^{\psi - \delta \phi}dV_{\omega_{\varepsilon}}$ is integrable by Lemma~\ref{lemma:5}.  
Let $\delta'>0$ be a sufficiently small positive number such that $\delta + \delta'<n-2$.  
We put $\delta'' = \delta + \delta'$.  
Then $\delta', \delta''$ depend only on $n$ and $\delta$.  
We have 
$\alpha \in L^{2}(\Omega, e^{\psi - (\delta'' - \delta')\phi}, \omega_{\varepsilon})$.  
By replacing $\delta$ with $\delta''$ in Lemma~\ref{lemma:2}, \ref{lemma:3} and \ref{lemma:4}, 
it follows that there exists $u \in L^{2}(\Omega, e^{\psi - (\delta''-\delta')\phi}, \omega_{\varepsilon})$ such that $\overline{\partial} u = \alpha$ and 
\[
\int_{\Omega} |u|^{2}e^{\psi - (\delta''-\delta')\phi}dV_{\omega_{\varepsilon}} \leq C_{n, \delta} \int_{\Omega}|\alpha|^{2}_{i\partial \overline{\partial}\psi}e^{\psi - (\delta''-\delta')\phi}dV_{\omega_{\varepsilon}}.  
\]
Then we have the proposition since $\delta = \delta''-\delta'$.    
The smoothness of $u$ is known (see \cite{Dem}, \cite{Hor}).  
\end{proof}

\section{Interior estimate of non-negative plurisubharmonic functions}\label{section:4}
The purpose of this section is the following theorem:
\begin{theorem}\label{theorem:2}
Let $\Omega \subset \mathbb{C}^{n}$ be a bounded hyperconvex domain and let $\varphi$ be a negative continuous plurisubharmonic function on $\Omega$ such that $\varphi (z) \to 0$ when $z \to \partial \Omega$.  
Let $v \geq 0$ be a plurisubharmonic function on $\Omega$.  
Then 
\begin{align*} 
&\int_{\{\varphi < r\}} i \partial \overline{\partial}v \wedge (i \partial \overline{\partial}|z|^{2})^{n-1} 
\leq C \int_{\Omega} v i \partial \varphi \wedge \overline{\partial} \varphi \wedge (i \partial \overline{\partial} \varphi )^{n-1}, \\
&\int_{\{\varphi < r\}} v (i \partial \overline{\partial}|z|^{2})^{n} 
\leq C \int_{\Omega} \left( v (i \partial \overline{\partial}\varphi)^{n} + v i \partial \varphi \wedge \overline{\partial} \varphi \wedge (i \partial \overline{\partial} \varphi )^{n-1}\right).   
\end{align*}
for $r < 0$.  
Here $C = \left( 1 + d(\Omega) + \sup |\varphi| + |r|^{-1} \right)^{C_{n}}$, 
$d(\Omega)$ is a diameter of $\Omega$, and $C_{n}$ is a positive constant which depends only on $n$.  
\end{theorem}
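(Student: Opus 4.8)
\emph{Proof proposal.} The plan is to prove both estimates first under the additional hypotheses that $\varphi\in C^{\infty}(\overline{\Omega})$ with $d\varphi\neq 0$ on $\partial\Omega$ (as in Proposition~\ref{proposition:1}) and that $v$ is smooth and bounded, and then to remove these by approximation: regularize $v$ by a decreasing sequence of smooth plurisubharmonic functions on domains exhausting $\Omega$, truncate $v$ by $\min(v,N)$ and let $N\to\infty$, and approximate $\varphi$ by smooth plurisubharmonic functions which are $C^{\infty}$ up to $\partial\Omega$ with non-vanishing gradient there; the passage to the limit is controlled by the Bedford--Taylor convergence of the mixed Monge--Amp\`ere masses involved and by monotone convergence, and it is harmless because $v$ enters both inequalities linearly. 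After a translation we may assume $|z|^{2}\leq d(\Omega)^{2}$ on $\Omega$, and we write $\beta=i\partial\overline{\partial}|z|^{2}$ and $\Omega_{s}=\{\varphi<s\}$, so that $\Omega_{r}\Subset\Omega$ and $-\varphi>|r|$ on $\Omega_{r}$. Throughout, the engine is integration by parts (Stokes) over the sublevel sets $\Omega_{s}$, together with the positivity of $i\partial\overline{\partial}v$, $i\partial\overline{\partial}\varphi$ and $i\partial\varphi\wedge\overline{\partial}\varphi$, which lets one discard every term of unfavourable sign.

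For the first inequality I would fix a cutoff $g\in C^{\infty}(\mathbb{R})$ with $0\leq g\leq 1$, $g\equiv 1$ on $(-\infty,r]$, $g\equiv 0$ on $[-|r|/2,0)$, and $\|g'\|_{\infty}$, $\|g''\|_{\infty}$ of order $|r|^{-1}$, $|r|^{-2}$; then $g(\varphi)\geq\mathbf{1}_{\Omega_{r}}$ and $g(\varphi)$ has compact support in $\Omega$, so that
\[
\int_{\Omega_{r}}i\partial\overline{\partial}v\wedge\beta^{n-1}\leq\int_{\Omega}g(\varphi)\,i\partial\overline{\partial}v\wedge\beta^{n-1}.
\]
Now I integrate by parts $n-1$ times, each step writing one factor $\beta=i\partial\overline{\partial}|z|^{2}$ and transferring its $i\partial\overline{\partial}$ onto the remaining factors; since $|z|^{2}$ is bounded by $d(\Omega)^{2}$ and the differentiations fall on $g(\varphi)$, producing $g'(\varphi)\,i\partial\overline{\partial}\varphi$ (with $g'\leq 0$) and $g''(\varphi)\,i\partial\varphi\wedge\overline{\partial}\varphi$ whose coefficients are controlled by $|r|^{-1},|r|^{-2},\sup|\varphi|$, each step trades a factor $\beta$ for one involving $i\partial\overline{\partial}\varphi$ at the cost of a factor $d(\Omega)^{2}$; a final integration by parts moves $i\partial\overline{\partial}$ off $v$. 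Discarding the terms carrying $g'(\varphi)\leq 0$ against a positive current, and the boundary contributions over $\partial\Omega$, what survives is bounded by $C\int_{\Omega}v\,i\partial\varphi\wedge\overline{\partial}\varphi\wedge(i\partial\overline{\partial}\varphi)^{n-1}$ with $C$ of the asserted form. The vanishing of the boundary contributions over $\partial\Omega$, i.e.\ over $\{\varphi=s\}$ as $s\uparrow 0$, is precisely where the truncation $v\leq N$, the hyperconvexity of $\Omega$ and the condition $d\varphi\neq 0$ on $\partial\Omega$ enter.

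For the second inequality I would start from $v\,\beta^{n}=v\,i\partial\overline{\partial}|z|^{2}\wedge\beta^{n-1}$ and integrate by parts on $\Omega_{r}$; using the product rule for $i\partial\overline{\partial}$ applied to the product of $v$ with $|z|^{2}-d(\Omega)^{2}\leq 0$, one obtains
\[
\int_{\Omega_{r}}v\,\beta^{n}\leq d(\Omega)^{2}\int_{\Omega_{r}}i\partial\overline{\partial}v\wedge\beta^{n-1}+\int_{\{\varphi=r\}}(\text{boundary terms}).
\]
The first term on the right is controlled by the first inequality. The level-set integrals over $\{\varphi=r\}$ are converted into bulk integrals over $\Omega$ by replacing $\mathbf{1}_{(-\infty,r)}$ by a smooth approximation $\chi(\varphi)$, whose derivative satisfies $\chi'(\varphi)\,d\varphi=d(\chi(\varphi))$ and concentrates on $\{\varphi=r\}$, and integrating by parts once more; after again discarding terms of unfavourable sign by positivity this produces the contribution $C\int_{\Omega}v\,(i\partial\overline{\partial}\varphi)^{n}$, which together with the term already obtained yields the claimed bound.

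The main obstacle I anticipate is twofold. First, because the total mass $\int_{\Omega}i\partial\overline{\partial}v\wedge\beta^{n-1}$ of an arbitrary nonnegative plurisubharmonic $v$ need not be finite, the boundary terms produced by Stokes over $\{\varphi=s\}$ do not obviously vanish as $s\uparrow 0$; controlling them forces the preliminary reduction to bounded $v$ and a genuine use of the behaviour of the level sets of $\varphi$ near $\partial\Omega$ (non-vanishing gradient, hyperconvexity). Second, arranging the iterated integration by parts so that the Euclidean factors $\beta$ are converted \emph{exactly} into $(i\partial\overline{\partial}\varphi)^{n-1}$, and so that the accumulated cross terms involving $i\partial\varphi\wedge\overline{\partial}\varphi$ and the powers of $g'(\varphi),g''(\varphi),\chi'(\varphi)$ recombine into the two measures on the right-hand side with a constant depending only on $n$, $d(\Omega)$, $\sup|\varphi|$ and $|r|^{-1}$, requires careful bookkeeping of signs and of the order in which the differentiations are distributed.
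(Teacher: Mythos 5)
Your reduction steps (approximating $v$ by a decreasing sequence of smooth plurisubharmonic functions, approximating $\varphi$ by smooth $\varphi_j$ with $d\varphi_j\neq 0$ on the boundary via Sard, passing to the limit by Bedford--Taylor convergence) match the paper's proof of Theorem~\ref{theorem:2}. The gap is in the core estimate. Your scheme converts a factor $\beta=i\partial\overline{\partial}|z|^{2}$ by moving its $i\partial\overline{\partial}$ onto the cutoff, producing $i\partial\overline{\partial}\bigl(g(\varphi)\bigr)=g'(\varphi)\,i\partial\overline{\partial}\varphi+g''(\varphi)\,i\partial\varphi\wedge\overline{\partial}\varphi$. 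The only term carrying the factor $i\partial\overline{\partial}\varphi$ that you need to accumulate is the one with coefficient $g'(\varphi)\leq 0$; for an upper bound of a positive quantity this is exactly the term you must \emph{discard}, not keep, while the surviving term $g''(\varphi)\,i\partial\varphi\wedge\overline{\partial}\varphi$ has no sign and still carries $\beta^{n-2}$ rather than a power of $i\partial\overline{\partial}\varphi$. So after one step you are left bounding $\int v\,i\partial\varphi\wedge\overline{\partial}\varphi\wedge\beta^{n-1}$ --- a Chern--Levine--Nirenberg-type Euclidean mass --- and no iteration of the same move upgrades $\beta^{n-1}$ to $(i\partial\overline{\partial}\varphi)^{n-1}$. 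Since the right-hand side of the theorem can be completely degenerate (e.g.\ $(i\partial\overline{\partial}\varphi)^{n-1}$ may vanish on large open sets), this conversion is the whole content of the statement, and your outline (as you yourself flag in the last paragraph) does not contain a mechanism for it.

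The paper's mechanism, in Lemma~\ref{lemma:6}, is a gluing/comparison trick: set $\eta=\frac{|r|}{2d(\Omega)^{2}}(|z|^{2}-2d(\Omega)^{2})$, so that $r<\eta<\frac r2$ on $\Omega$, and let $\rho=\max_{\epsilon}\{\varphi,\eta\}$ be a regularized maximum. Then $\rho$ is a single global smooth plurisubharmonic function equal to $\eta$ (hence with $i\partial\overline{\partial}\rho$ a positive constant multiple of $\beta$) on $\{\varphi<r\}$ and equal to $\varphi$ near $\{\varphi=\frac r3\}$. The double integration by parts
\[
\int_{\{\varphi<r/3\}}\Bigl(\tfrac r3-\varphi\Bigr)\,S\wedge i\partial\overline{\partial}\rho
=\int_{\{\varphi<r/3\}}\Bigl(\tfrac r3-\rho\Bigr)\,S\wedge i\partial\overline{\partial}\varphi
\]
(for $S$ closed and positive) has no boundary terms because $\rho-\varphi$ vanishes near $\{\varphi=\frac r3\}$, preserves positivity of both weights, and genuinely replaces one factor of $\beta$ by one factor of $i\partial\overline{\partial}\varphi$ at the cost of a factor $d(\Omega)^{2}\sup|\varphi|/|r|^{2}$ and of shrinking the sublevel set from $\{\varphi<r\}$ to $\{\varphi<r/3\}$. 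Iterating $n-1$ times and then applying the separate integration-by-parts identity (\ref{equation:3}), namely $\int_{\Omega}\varphi^{2}\,i\partial\overline{\partial}v\wedge(i\partial\overline{\partial}\varphi)^{n-1}\leq 2\int_{\Omega}v\,i\partial\varphi\wedge\overline{\partial}\varphi\wedge(i\partial\overline{\partial}\varphi)^{n-1}$ (where the weight $\varphi^{2}$, not a cutoff, is what produces the gradient term), gives the first inequality; the second follows by the analogous induction of Lemma~\ref{lemma:7}. You would need to incorporate this regularized-max swap (or an equivalent comparison-theorem argument) for the proof to go through.
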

In the above theorem, $i\partial \overline{\partial}v \wedge (i\partial \overline{\partial}|z|^{2})^{n-1}$, $i \partial \varphi \wedge \overline{\partial} \varphi \wedge (i \partial \overline{\partial} \varphi )^{n-1}$, and $(i \partial \overline{\partial}\varphi)^{n}$ are defined in the sense of Bedford-Taylor (see \cite{Bed-Tay}, \cite{Kli}).  
 
\begin{lemma}\label{lemma:6}
Let $k$ be a non-negative integer.  
We assume the same hypothesis of Theorem~\ref{theorem:2}, and we 
assume that $v, \varphi \in C^{\infty}(\overline{\Omega})$ and that $d\varphi \neq 0$ on $\partial \Omega$.  
Then 
\[
\int_{\{\varphi < r\}}i \partial \overline{\partial}v \wedge (i \partial \overline{\partial}\varphi)^{k}\wedge(i\partial \overline{\partial}|z|^{2})^{n-k-1} 
\leq C_{n, k} \frac{(d(\Omega)^{2}\sup |\varphi|)^{n-k-1}}{r^{2(n-k)}}\int_{\Omega} v i\partial \varphi \wedge \overline{\partial}\varphi \wedge (i\partial \overline{\partial}\varphi)^{n-1}.   
\]
Here $C_{n, k}$ is a positive constant which depends only on $n$ and $k$.  
\end{lemma}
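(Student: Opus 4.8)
The plan is to prove this integral inequality by an integration-by-parts argument, converting one factor of $i\partial\overline{\partial}|z|^{2}$ at a time into factors involving $\varphi$, using the hyperconvexity (via $\psi=|z|^{2}$ and the cutoff provided by level sets of $\varphi$) to control boundary terms. First I would fix a smooth convex increasing function $\chi\colon\mathbb{R}\to\mathbb{R}$ with $\chi(t)=0$ for $t\le r$ and $\chi'(t)>0$ for $t>r$, so that $\chi(\varphi)$ vanishes on $\{\varphi\le r\}$ and $i\partial\overline{\partial}\chi(\varphi)=\chi'(\varphi)\,i\partial\overline{\partial}\varphi+\chi''(\varphi)\,i\partial\varphi\wedge\overline{\partial}\varphi\ge 0$. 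The idea is that on $\{\varphi<r\}$ the integrand is honest, while the factor $(|r|/\sup|\varphi|)$-type bounds enter because $-\varphi/|r|\ge 1$ there and $\chi'$ can be taken comparable to $1/|r|$. The key identity is Stokes/integration by parts in the form
\[
\int_{\Omega} i\partial\overline{\partial}v\wedge T = \int_{\Omega} v\, i\partial\overline{\partial}(\text{something})\wedge(\cdots) + (\text{boundary terms}),
\]
applied with $T=(i\partial\overline{\partial}\varphi)^{k}\wedge(i\partial\overline{\partial}|z|^{2})^{n-k-1}$ multiplied by a suitable cutoff built from $\varphi$.

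Concretely, the main steps I would carry out are: (1) Replace the domain $\{\varphi<r\}$ by the weight $(-\chi(\varphi))$ or a power thereof, so that the integral over $\{\varphi<r\}$ is bounded by $\int_{\Omega}$ of $i\partial\overline{\partial}v$ wedged against a positive closed-up-to-exact form supported where $\varphi<r$ fails to vanish; here one uses $|z|^{2}\le d(\Omega)^{2}$ and $|\varphi|\le\sup|\varphi|$ to absorb $i\partial\overline{\partial}|z|^{2}$ against $i\partial\overline{\partial}\varphi$ up to the stated constants, paying a factor of $d(\Omega)^{2}\sup|\varphi|$ for each of the $n-k-1$ Hermitian factors and a factor of $r^{-2}$ for each step where one trades a bounded form for $i\partial\varphi\wedge\overline{\partial}\varphi/\varphi^{2}$-type terms. (2) Integrate by parts twice to move both derivatives off $v$: since $v\ge 0$ is only plurisubharmonic (here smooth by hypothesis) we land on $v\,i\partial\overline{\partial}(\cdots)\wedge(\cdots)$, and the positivity of all the forms involved together with $v\ge 0$ keeps everything of one sign, so no cancellation is lost. (3) Push all the $(i\partial\overline{\partial}|z|^{2})$-factors through until the top-degree form becomes $v\,i\partial\varphi\wedge\overline{\partial}\varphi\wedge(i\partial\overline{\partial}\varphi)^{n-1}$; the extra factor $i\partial\varphi\wedge\overline{\partial}\varphi$ (rather than one more $i\partial\overline{\partial}\varphi$) appears naturally from differentiating the cutoff $\chi(\varphi)$, which is exactly why the right-hand side of the lemma has this gradient term. (4) Finally check that the boundary terms on $\partial\Omega$ vanish: because $\varphi\to 0$ at $\partial\Omega$ and $d\varphi\ne 0$ there, the cutoffs built from $\chi(\varphi)$ (and their $\partial$-derivatives) vanish identically near $\partial\Omega$, so Stokes' theorem produces no boundary contribution.

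The main obstacle I expect is bookkeeping the exponent $r^{-2(n-k)}$ and the power $(d(\Omega)^{2}\sup|\varphi|)^{n-k-1}$ correctly: each integration-by-parts step that converts an $i\partial\overline{\partial}|z|^{2}$ into a $\varphi$-form, and each use of the cutoff derivative $\chi'$ or $\chi''$, contributes a definite power of $|r|^{-1}$ and of $d(\Omega)^{2}\sup|\varphi|$, and one must organize the induction on $k$ (or equivalently on the number of Hermitian factors $n-k-1$) so that these powers add up to exactly the claimed exponents rather than merely to $C(\ldots)^{C_{n}}$. A secondary technical point is justifying the integration by parts when $v$ is only assumed smooth on $\overline{\Omega}$ but $i\partial\overline{\partial}v$ may be large; since everything is smooth here and the cutoffs have compact support in $\Omega$, this is routine, but one should state it carefully so that the passage to the general (merely continuous $\varphi$, plurisubharmonic $v$) case in Theorem~\ref{theorem:2} via Bedford--Taylor approximation goes through. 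I would handle the induction by proving the one-step estimate
\[
\int_{\{\varphi<r\}} i\partial\overline{\partial}v\wedge(i\partial\overline{\partial}\varphi)^{k}\wedge(i\partial\overline{\partial}|z|^{2})^{n-k-1}
\le C_{n}\,\frac{d(\Omega)^{2}\sup|\varphi|}{r^{2}}\int_{\{\varphi<r'\}} i\partial\overline{\partial}v\wedge(i\partial\overline{\partial}\varphi)^{k+1}\wedge(i\partial\overline{\partial}|z|^{2})^{n-k-2}
\]
for a slightly smaller level $r'$, and then iterating from $k$ up to $n-1$, at which stage the remaining integral is directly $\le C_n r^{-2}\int_\Omega v\,i\partial\varphi\wedge\overline{\partial}\varphi\wedge(i\partial\overline{\partial}\varphi)^{n-1}$ by one last integration by parts.
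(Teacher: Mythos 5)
Your overall strategy coincides with the paper's: a preliminary Stokes identity giving $\int_{\Omega}\varphi^{2}\,i\partial\overline{\partial}v\wedge(i\partial\overline{\partial}\varphi)^{n-1}\le 2\int_{\Omega}v\,i\partial\varphi\wedge\overline{\partial}\varphi\wedge(i\partial\overline{\partial}\varphi)^{n-1}$, plus an iterated one-step estimate that trades one factor of $i\partial\overline{\partial}|z|^{2}$ for one factor of $i\partial\overline{\partial}\varphi$ at the cost of $C\,d(\Omega)^{2}\sup|\varphi|/|r|^{2}$ while enlarging the sublevel set (the paper goes from $\{\varphi<r\}$ to $\{\varphi<r/3\}$ at each step). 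Your closing displayed inequality is exactly the paper's intermediate estimate, and your bookkeeping of the exponents is right.

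The genuine gap is in \emph{how} the one-step swap is performed. Integration by parts moves derivatives between functions, so to replace $i\partial\overline{\partial}|z|^{2}$ by $i\partial\overline{\partial}\varphi$ you need a single plurisubharmonic function that equals a rescaling of $|z|^{2}$ on the region of integration and equals $\varphi$ near the boundary of the larger sublevel set, so that Stokes' theorem produces no boundary terms. The paper builds this explicitly: $\eta=\frac{|r|}{2d(\Omega)^{2}}(|z|^{2}-2d(\Omega)^{2})$ takes values in $(r,r/2)$ on $\Omega$, and $\rho=\max_{\epsilon}\{\varphi,\eta\}$ satisfies $\rho=\eta$ on $\{\varphi<r\}$ and $\rho=\varphi$ near $\{\varphi=r/3\}$; the symmetric identity $\int(\frac{r}{3}-\varphi)\,T\wedge i\partial\overline{\partial}(\rho-\frac{r}{3})=\int(\frac{r}{3}-\rho)\,T\wedge i\partial\overline{\partial}(\varphi-\frac{r}{3})$ (with $T$ closed and both weights vanishing to first order on $\{\varphi=r/3\}$) then does the swap, and $|\frac{r}{3}-\rho|\le\sup|\varphi|$ supplies the constant. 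Your proposed cutoff $\chi$ with $\chi(t)=0$ for $t\le r$ is oriented the wrong way (it kills the region $\{\varphi<r\}$ you are trying to estimate), and no function interpolating between $|z|^{2}$ and $\varphi$ appears in your plan, so the asserted one-step estimate is not actually derived. Once you replace the cutoff by the regularized max $\rho$ above, the rest of your outline (iteration, then $\varphi^{2}\ge (r/3^{n-k-1})^{2}$ on the final sublevel set combined with the preliminary identity) goes through as you describe.
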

\begin{proof}
By Stokes theorem, we have that 
\begin{align*}
&\int_{\Omega} v i \partial \varphi \wedge \overline{\partial}\varphi \wedge (i \partial \overline{\partial}\varphi)^{n-1} = - \int_{\Omega} \varphi i\partial v \wedge \overline{\partial}\varphi \wedge (i\partial \overline{\partial}\varphi)^{n-1} + \int_{\Omega} (-\varphi) v (i\partial \overline{\partial}\varphi)^{n}\\
\geq & -\frac{1}{2} \int_{\Omega} i\partial v \wedge \overline{\partial}\varphi^{2} \wedge (i\partial \overline{\partial}\varphi)^{n-1} = \frac{1}{2} \int_{\Omega} \varphi^{2} i \partial \overline{\partial}v \wedge (i \partial \overline{\partial}\varphi)^{n-1}, 
\end{align*}
and we have that 
\begin{equation}\label{equation:3}
\int_{\Omega} \varphi^{2} i \partial \overline{\partial}v \wedge (i \partial \overline{\partial}\varphi)^{n-1} \leq 2 \int_{\Omega} v i \partial \varphi \wedge \overline{\partial} \varphi \wedge (i \partial \overline{\partial}\varphi)^{n-1}.  
\end{equation}

Without loss of generality, we may assume that $0 \in \partial\Omega$.  
Let $\eta = \frac{|r|}{2d(\Omega)^{2}}(|z|^{2}-2d(\Omega)^{2})$.  
We have that $\eta$ is smooth plurisubharmonic function such that $r < \eta < \frac{r}{2}$ in $\Omega$. 
For sufficiently small $\epsilon > 0$, we put $\rho = \max_{\epsilon}\{\varphi, \eta\}$ where $\max_{\epsilon}$ is a regularized max function (see Chapter~I, Section~5 of \cite{Dem}).  
Then $\rho$ is a smooth plurisubharmonic function on $\Omega$ such that 
$\rho = \varphi$ near $\{z \in \Omega \, |\, \varphi(z) = \frac{r}{3}\}$ and 
$\rho = \eta$ on $\{z \in \Omega \, |\, \varphi(z) < r\}$.  
After a slight perturbation of $r$, we may assume that $d\varphi \neq 0$ on $\{z \in \Omega \, |\,  \varphi(z) = \frac{r}{3}\}$.  
By Stokes theorem, we have that 
\begin{align*}
& \int_{\{\varphi < r\}} i \partial \overline{\partial}v \wedge (i \partial \overline{\partial}\varphi)^{k} \wedge (i\partial \overline{\partial}|z|^{2})^{n-k-1}  \\
= & \frac{2d(\Omega)^{2}}{|r|} \int_{\{\varphi < r\}} i\partial \overline{\partial} v \wedge  (i \partial \overline{\partial}\varphi)^{k} \wedge i \partial \overline{\partial}\rho\wedge (i\partial \overline{\partial}|z|^{2})^{n-k-2} \\
\leq & \frac{2d(\Omega)^{2}}{|r|}\frac{3}{2|r|}\int_{\{\varphi < r\}} \left( \frac{r}{3}-\varphi \right) i \partial \overline{\partial} v \wedge (i \partial \overline{\partial}\varphi)^{k} \wedge i\partial \overline{\partial}\rho \wedge (i\partial \overline{\partial}|z|^{2})^{n-k-2} \\
\leq & \frac{3d(\Omega)^{2}}{|r|^{2}} \int_{\{\varphi < r/3 \}} \left( \frac{r}{3}-\varphi \right) i \partial \overline{\partial} v \wedge (i \partial \overline{\partial}\varphi)^{k} \wedge i\partial \overline{\partial}\left( \rho - \frac{r}{3}\right) \wedge (i\partial \overline{\partial}|z|^{2})^{n-k-2} \\
= & \frac{3d(\Omega)^{2}}{|r|^{2}} \int_{\{\varphi < r/3 \}} \left( \frac{r}{3}-\rho \right) i \partial \overline{\partial} v \wedge (i \partial \overline{\partial}\varphi)^{k} \wedge i\partial \overline{\partial}\left( \varphi - \frac{r}{3}\right) \wedge (i\partial \overline{\partial}|z|^{2})^{n-k-2} \\
\leq & \frac{3d(\Omega)^{2}\sup |\varphi|}{|r|^{2}} \int_{\{\varphi < r/3 \}} i \partial \overline{\partial} v \wedge (i \partial \overline{\partial}\varphi)^{k+1} \wedge (i\partial \overline{\partial}|z|^{2})^{n-k-2} 
\end{align*}
By repeating the same process, we have that 
\begin{align*}
& \int_{\{\varphi < r\}} i \partial \overline{\partial}v \wedge (i \partial \overline{\partial}\varphi)^{k} \wedge (i\partial \overline{\partial}|z|^{2})^{n-k-1}  \\
\leq & 3^{(n-k-1)^{2}}\left(\frac{d(\Omega)^{2} \sup |\varphi|}{|r|^{2}}\right)^{n-k-1} 
\int_{\{\varphi < r/3^{n-k-1}\}} i\partial \overline{\partial}v \wedge (i \partial \overline{\partial} \varphi)^{n-1} \\
\leq & 3^{(n-k-1)^{2}}\left(\frac{d(\Omega)^{2} \sup |\varphi|}{|r|^{2}}\right)^{n-k-1} 
\left( \frac{3^{n-k-1}}{|r|}\right)^{2} \int_{\{\varphi < r/3^{n-k-1}\}} \varphi^{2} i\partial \overline{\partial}v \wedge (i \partial \overline{\partial} \varphi)^{n-1} \\
\leq & 3^{(n-k-1)(n-k +1)}2\frac{(d(\Omega)^{2} \sup |\varphi|)^{n-k-1}}{|r|^{2(n-k)}}
\int_{\Omega} v i \partial \varphi \wedge \overline{\partial} \varphi \wedge (i \partial \overline{\partial}\varphi)^{n-1}.  
\end{align*}
The last inequality follows from (\ref{equation:3}).  
This completes the proof.  
\end{proof}
\begin{remark}
To prove Theorem~\ref{theorem:1}, the rest of this section is not necessary.  
Indeed, Lemma~\ref{lemma:6} shows that 
\[
\int_{\{\varphi<r\}} |\nabla F|^{2} (i\partial \overline{\partial} |z|^{2})^{n} \leq C \int_{\Omega} |F|^{2}i\partial \varphi \wedge \overline{\partial} \varphi \wedge (i \partial \overline{\partial} \varphi)^{n-1}
\]
for holomorphic function $F$.  
Here $C$ does not depend on $F$.  
This implies that the solutions constructed in Section~\ref{section:3} are bounded locally and we can prove Theorem~\ref{theorem:3} below.  
\end{remark}
\begin{lemma}\label{lemma:7}
Let $k$ be a non-negative integer.  
Under the same assumption of Lemma~\ref{lemma:6}, we have that 
\[
\int_{\{\varphi < r\}} v (i \partial \overline{\partial}\varphi)^{k} \wedge (i \partial \overline{\partial} |z|^{2})^{n-k} 
\leq C \left( \int_{\Omega} v(i\partial \overline{\partial} \varphi)^{n} + v i\partial \varphi \wedge \overline{\partial} \varphi \wedge (i \partial \overline{\partial} \varphi)^{n-1}\right)
\]
where $C = \left(1 + d(\Omega) + \sup |\varphi| + |r|^{-1} \right)^{C_{n, k}}$, and $C_{n, k}$ is a positive constant which depends only on $n$ and $k$.  
\end{lemma}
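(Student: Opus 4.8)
The plan is to reduce the estimate for $v(i\partial\overline{\partial}\varphi)^{k}\wedge(i\partial\overline{\partial}|z|^{2})^{n-k}$ to the already-established Lemma~\ref{lemma:6} by the same integration-by-parts device used there, namely replacing one factor of $i\partial\overline{\partial}|z|^{2}$ by $i\partial\overline{\partial}\rho$ for a suitable comparison function $\rho$ built from the regularized maximum of $\varphi$ and a quadratic $\eta$. First I would set up exactly the same auxiliary data as in Lemma~\ref{lemma:6}: assume $0\in\partial\Omega$, put $\eta=\frac{|r|}{2d(\Omega)^{2}}(|z|^{2}-2d(\Omega)^{2})$ so that $r<\eta<r/2$ on $\Omega$, let $\rho=\max_{\epsilon}\{\varphi,\eta\}$, which is smooth plurisubharmonic, equals $\varphi$ near $\{\varphi=r/3\}$ and equals $\eta$ on $\{\varphi<r\}$, and perturb $r$ slightly so that $d\varphi\neq0$ on $\{\varphi=r/3\}$. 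Then on $\{\varphi<r\}$ we have $i\partial\overline{\partial}|z|^{2}=\frac{2d(\Omega)^{2}}{|r|}\,i\partial\overline{\partial}\rho$, so
\[
\int_{\{\varphi<r\}} v(i\partial\overline{\partial}\varphi)^{k}\wedge(i\partial\overline{\partial}|z|^{2})^{n-k}
=\frac{2d(\Omega)^{2}}{|r|}\int_{\{\varphi<r\}} v(i\partial\overline{\partial}\varphi)^{k}\wedge i\partial\overline{\partial}\rho\wedge(i\partial\overline{\partial}|z|^{2})^{n-k-1}.
\]

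Next I would run the same telescoping argument: insert the factor $\frac{3}{2|r|}(r/3-\varphi)\geq 1$ on $\{\varphi<r\}$, enlarge the domain of integration from $\{\varphi<r\}$ to $\{\varphi<r/3\}$, and then apply Stokes' theorem to move $i\partial\overline{\partial}$ off $\rho-r/3$ (which vanishes where we cut off, near $\{\varphi=r/3\}$) and onto the scalar factor; since $i\partial\overline{\partial}(r/3-\varphi)=-i\partial\overline{\partial}\varphi\leq 0$ and $i\partial\overline{\partial}\rho\geq 0$, after integration by parts the term where the Laplacian hits $\rho-r/3$ has a favorable sign and can be dropped, while we are left with the scalar factor $r/3-\rho\leq\sup|\varphi|$ times $v(i\partial\overline{\partial}\varphi)^{k+1}\wedge(i\partial\overline{\partial}|z|^{2})^{n-k-2}$. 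One must also keep track of the term where $i\partial\overline{\partial}$ hits $v$: this produces $\int(r/3-\rho)\,i\partial\overline{\partial}v\wedge(i\partial\overline{\partial}\varphi)^{k}\wedge i\partial\overline{\partial}(\varphi-r/3)\wedge(i\partial\overline{\partial}|z|^{2})^{n-k-2}$ on $\{\varphi<r/3\}$, but the sign of $i\partial\overline{\partial}(\varphi-r/3)=i\partial\overline{\partial}\varphi\geq0$ together with $r/3-\rho\leq 0$ (since $\rho\geq\varphi$ wherever $\varphi\geq r/3$... actually $\rho=\eta>r/3$ is not automatic) — more carefully, this residual term is controlled by bounding $|r/3-\rho|\leq\sup|\varphi|$ and recognizing $i\partial\overline{\partial}v\wedge(i\partial\overline{\partial}\varphi)^{k+1}\wedge(i\partial\overline{\partial}|z|^{2})^{n-k-2}$ as exactly the kind of quantity handled by Lemma~\ref{lemma:6}. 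Iterating the step $n-k$ times reduces the power of $i\partial\overline{\partial}|z|^{2}$ to zero, turning the integrand into $v(i\partial\overline{\partial}\varphi)^{n}$ over a shrunken sublevel set, plus finitely many Lemma~\ref{lemma:6}-type remainder terms each bounded by $C\int_{\Omega}v\,i\partial\varphi\wedge\overline{\partial}\varphi\wedge(i\partial\overline{\partial}\varphi)^{n-1}$; collecting the powers of $d(\Omega)^{2}/|r|^{2}$, $\sup|\varphi|$ and the combinatorial constants $3^{\ldots}$ gives the claimed bound of the form $(1+d(\Omega)+\sup|\varphi|+|r|^{-1})^{C_{n,k}}$.

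The main obstacle I anticipate is bookkeeping the boundary/sign issues in the repeated Stokes' theorem: at each step one has to verify that the cutoff function (a multiple of $\rho-r/3$ or of $r/3-\varphi$) genuinely vanishes to first order on the relevant level set so there is no boundary contribution, that $d\varphi\neq0$ there (guaranteed after perturbing $r$) so the level sets are smooth hypersurfaces, and that the term in which $i\partial\overline{\partial}$ lands on the scalar weight $r/3-\rho$ carries the right sign to be discarded rather than estimated. Once the scheme is matched exactly to that of Lemma~\ref{lemma:6}, the only genuinely new feature is that we are integrating $v$ against a product of positive $(1,1)$-forms rather than $i\partial\overline{\partial}v$, so the terminal term after the telescoping is $\int_{\Omega}v(i\partial\overline{\partial}\varphi)^{n}$ rather than a Hessian integral; this is why the right-hand side of Lemma~\ref{lemma:7} picks up the extra summand $\int_{\Omega}v(i\partial\overline{\partial}\varphi)^{n}$ compared with Lemma~\ref{lemma:6}. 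All integrals are well defined in the Bedford--Taylor sense under the smoothness assumptions of Lemma~\ref{lemma:6}, so no extra regularization beyond $\max_{\epsilon}$ is needed.
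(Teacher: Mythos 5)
Your overall scheme is the right one and is the same as the paper's: iterate (the paper phrases it as induction on $n-k$), at each stage trade one factor of $i\partial\overline{\partial}|z|^{2}$ for $\frac{2d(\Omega)^{2}}{|r|}\,i\partial\overline{\partial}\rho$ on the sublevel set, use Stokes to convert $i\partial\overline{\partial}\rho$ into $i\partial\overline{\partial}\varphi$, keep the resulting term $\int v(i\partial\overline{\partial}\varphi)^{k+1}\wedge(i\partial\overline{\partial}|z|^{2})^{n-k-1}$ as the inductive quantity, and control the error by Lemma~\ref{lemma:6}; the terminal term $\int_{\Omega}v(i\partial\overline{\partial}\varphi)^{n}$ is exactly why the extra summand appears, as you say.

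The flaw is in how you execute the central Stokes step. You import the weight $\frac{3}{2|r|}(r/3-\varphi)$ from Lemma~\ref{lemma:6}, so that the scalar in front of $i\partial\overline{\partial}\rho$ becomes the product $(r/3-\varphi)v$. Moving $i\partial\overline{\partial}$ onto this product produces, by Leibniz, not only $(r/3-\varphi)\,i\partial\overline{\partial}v$ and $-v\,i\partial\overline{\partial}\varphi$ but also the cross terms $-i\partial\varphi\wedge\overline{\partial}v-i\partial v\wedge\overline{\partial}\varphi$, which are signed mixed forms not covered by Lemma~\ref{lemma:6} and which your plan never addresses; your bookkeeping of the surviving terms is also inconsistent (the displayed ``residual term'' contains both $i\partial\overline{\partial}v$ and an extra $i\partial\overline{\partial}\varphi$, and the exponent $n-k-2$ is off by one, since swapping $i\partial\overline{\partial}\rho$ for $i\partial\overline{\partial}\varphi$ should leave $(i\partial\overline{\partial}|z|^{2})^{n-k-1}$). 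The weight is in fact unnecessary here: since $v$ itself is the scalar factor, the paper integrates by parts directly,
\begin{equation*}
\int_{\{\varphi<r/3\}} v\,T\wedge i\partial\overline{\partial}\rho
=\int_{\{\varphi=r/3\}} v\,i\overline{\partial}\rho\wedge T-\int_{\{\varphi<r/3\}} i\partial v\wedge\overline{\partial}\rho\wedge T,
\end{equation*}
with $T=(i\partial\overline{\partial}\varphi)^{k}\wedge(i\partial\overline{\partial}|z|^{2})^{n-k-1}$, uses $\rho=\varphi$ near $\{\varphi=r/3\}$ to rewrite the boundary term, and reverses the integration by parts with $\varphi$ in place of $\rho$; this yields exactly $\int i\partial v\wedge\overline{\partial}(\varphi-\rho)\wedge T+\int v(i\partial\overline{\partial}\varphi)^{k+1}\wedge(i\partial\overline{\partial}|z|^{2})^{n-k-1}$ with no cross terms. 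The first term becomes $-\int(\varphi-\rho)\,i\partial\overline{\partial}v\wedge T$ (no boundary contribution since $\varphi-\rho$ vanishes near $\{\varphi=r/3\}$), is bounded by $\sup|\varphi|$ times a Lemma~\ref{lemma:6} quantity, and the induction closes. So your proof becomes correct once you drop the inserted weight and do the single clean double-integration-by-parts above.
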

\begin{proof}
We prove the lemma by induction on $l = n-k$.  
It is clear for $l = 0$.  
Under the notation of the proof of Lemma~\ref{lemma:6}, we have that 
\begin{align*}
& \frac{|r|}{2d(\Omega)^{2}}\int_{\{\varphi < r\}} v (i \partial \overline{\partial}\varphi)^{k} \wedge (\partial \overline{\partial}|z|^{2})^{n-k} 
\leq   \int_{\{\varphi < r/3\}} v (i \partial \overline{\partial}\varphi)^{k} \wedge i\partial \overline{\partial}\rho \wedge (i\partial \overline{\partial}|z|^{2})^{n-k-1} \\ \nonumber
= & \int_{\{\varphi = r/3\}} v i \overline{\partial}\rho \wedge (i \partial \overline{\partial}\varphi)^{k} \wedge (i \partial \overline{\partial} |z|^{2})^{n-k-1}
-\int_{\{\varphi < r/3\}} i \partial v \wedge \overline{\partial}\rho \wedge (i\partial \overline{\partial}\varphi)^{k} \wedge (i\partial \overline{\partial}|z|^{2})^{n-k-1}\\
= &  \int_{\{\varphi = r/3\}} v i \overline{\partial}\varphi \wedge (i \partial \overline{\partial}\varphi)^{k} \wedge (i \partial \overline{\partial} |z|^{2})^{n-k-1} 
-\int_{\{\varphi < r/3\}} i \partial v \wedge \overline{\partial}\rho \wedge (i\partial \overline{\partial}\varphi)^{k} \wedge (i\partial \overline{\partial}|z|^{2})^{n-k-1}\\
= & \int_{\{\varphi < r/3\}} i \partial v \wedge \overline{\partial}(\varphi - \rho) \wedge (i\partial\overline{\partial}\varphi)^{k} \wedge (i\partial \overline{\partial}|z|^{2})^{n-k-1} 
+ \int_{\{\varphi < r/3\}} v (i\partial \overline{\partial} \varphi)^{k+1} \wedge (i \partial \overline{\partial}|z|^{2})^{n-k-1}.  
\end{align*}
The last term of the above inequality is bounded from above by the hypothesis of the induction.  
By Lemma~\ref{lemma:6}, 
the second to last term of the above inequality is bounded from above by 
\[
C_{n, k} \frac{d(\Omega)^{2(n-k-1)}\sup |\varphi|^{n-k}}{r^{2(n-k)}}\int_{\Omega} v i\partial \varphi \wedge \overline{\partial}\varphi \wedge (i\partial \overline{\partial}\varphi)^{n-1}
\]
since 
\begin{align*}
& \left| \int_{\{\varphi < r/3\}} (\rho - \varphi) i\partial \overline{\partial}v \wedge (i \partial \overline{\partial} \varphi)^{k}\wedge (i\partial \overline{\partial}|z|^{2})^{n-k-1} \right|\\
\leq & \sup |\varphi| \int_{\{\varphi < r/3\}} i\partial \overline{\partial}v \wedge (i \partial \overline{\partial} \varphi)^{k}\wedge (i\partial \overline{\partial}|z|^{2})^{n-k-1}.  
\end{align*}
This completes the proof by the induction.  
\end{proof}
\begin{proof}[Proof of Theorem \ref{theorem:2}]
We prove the first inequality.  
Let $\varepsilon > 0$ be a small positive number.  
It is enough to prove the theorem with $\varphi$ and $\Omega$ replaced by 
$\varphi + \varepsilon$ and $\{z \in \Omega \,|\, \varphi(z) + \varepsilon < 0\}$.  
Hence we may assume that $\varphi$ and $v$ are plurisubharmonic functions defined on an open neighborhood of $\overline{\Omega}$.  
Let $v_{j}$ be a decreasing sequence of smooth plurisubharmonic functions on an open  neighborhood of $\overline{\Omega}$ which converge to $v$.  
Since $\int_{\{\varphi < r\}} i\partial \overline{\partial}v \wedge (i\partial \overline{\partial}|z|^{2})^{n-1} \leq \liminf_{j \to \infty} \int_{\{\varphi < r\}}i\partial \overline{\partial}v_{j} \wedge (i\partial \overline{\partial}|z|^{2})^{n-1}$, 
it is enough to prove the theorem for $v \in C^{\infty}(\overline{\Omega})$.  
Since $\varphi$ is continuous, there exists a decreasing sequence $\varphi_{j}$ of smooth plurisubharmonic functions on an open  neighborhood of $\overline{\Omega}$ which converge to $\varphi$ uniformly.  
Let $\Omega_{j} = \{z \in \Omega \, | \, \varphi_{j}(z) < 0\}$.  
We may assume that $d\varphi_{j} \neq 0$ on $\partial \Omega_{j}$ by Sard's theorem.  
By Lemma~\ref{lemma:6}, 
we have 
\[
\int_{\{\varphi_{j} < r\}} i \partial \overline{\partial}v \wedge (i \partial \overline{\partial}|z|^{2})^{n-1} 
\leq C_{n} \frac{(d(\Omega_{j})^{2} \sup |\varphi_{j}|)^{n-1}}{r^{2n}} \int_{\Omega_{j}} v i \partial \varphi_{j} \wedge \overline{\partial} \varphi_{j} \wedge (i \partial \overline{\partial} \varphi_{j})^{n-1} 
\]
Since 
\[
\limsup_{j \to \infty} \int_{\overline{\Omega}} v i \partial \varphi_{j} \wedge \overline{\partial} \varphi_{j} \wedge (i \partial \overline{\partial} \varphi_{j} )^{n-1} 
\leq \int_{\overline{\Omega}} v i \partial \varphi \wedge \overline{\partial} \varphi \wedge (i \partial \overline{\partial} \varphi )^{n-1}, 
\]
we have that 
\[
\int_{\{\varphi < r\}} i \partial \overline{\partial}v \wedge (i \partial \overline{\partial}|z|^{2})^{n-1} 
\leq C_{n} \frac{(d(\Omega)^{2} \sup |\varphi|)^{n-1}}{r^{2n}} \int_{\overline{\Omega}} v i \partial \varphi \wedge \overline{\partial} \varphi \wedge (i \partial \overline{\partial} \varphi)^{n-1}.  
\] 
Then the first inequality of Theorem~\ref{theorem:2} follows 
by the continuity of  $C$ in the theorem with respect to $d(\Omega), \sup |\varphi|$, and $|r|$.  
The second inequality can be proved by the same way.  
\end{proof}
\section{Proof of the main theorem}
Now we solve the $\overline{\partial}$ equation in the $L^{2}$-space defined by $(i\partial \overline{\partial} \phi)^{n}$.  
\begin{theorem}\label{theorem:3}
Let $\Omega \subset \mathbb{C}^{n}$ ($n \geq 4$) be a bounded hyperconvex domain.  
Let $\varphi \in C^{\infty}(\overline{\Omega})$ such that 
$\varphi$ is negative plurisubharmonic on $\Omega$, $\varphi(z) \to 0$ when $z \to \partial \Omega$, and $d\varphi \neq 0$ on $\partial \Omega$.  
Let $\alpha$ be a smooth $(0, 1)$-form defined on an open neighborhood of $\overline{\Omega}$ such that $\overline{\partial}\alpha = 0$ in $\Omega$ and 
$\mathrm{supp}\, \alpha \subset \overline{\Omega} \setminus \mathrm{supp}\, (i \partial \overline{\partial}\varphi)^{n-3}$ in $\overline{\Omega}$.   
Then there exists a smooth function $u$ on $\Omega$ such that $\overline{\partial} u = \alpha$ and $\int_{\Omega} |u|^{2} (i\partial \overline{\partial} \phi)^{n} = 0$.  
\end{theorem}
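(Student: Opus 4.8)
The plan is to obtain $u$ as a subsequential limit, as $\varepsilon\to 0$, of the solutions $u_{\varepsilon}$ furnished by Proposition~\ref{proposition:1}, and to arrange that the limit annihilates the degenerate Monge--Amp\`ere mass $(i\partial\overline{\partial}\phi)^{n}$. Throughout I fix $\delta$ with $n-3<\delta<n-2$.

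First I would invoke Proposition~\ref{proposition:1}: for all sufficiently small $\varepsilon>0$ there is $u_{\varepsilon}\in C^{\infty}(\Omega)$ with $\overline{\partial}u_{\varepsilon}=\alpha$ and $\int_{\Omega}|u_{\varepsilon}|^{2}e^{\psi-\delta\phi}dV_{\omega_{\varepsilon}}\leq M_{\varepsilon}:=C_{n,\delta}\int_{\Omega}|\alpha|^{2}_{i\partial\overline{\partial}\psi}e^{\psi-\delta\phi}dV_{\omega_{\varepsilon}}$. The first key point is that $M_{\varepsilon}\leq C\varepsilon^{3}$ with $C$ independent of $\varepsilon$. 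Here $e^{\psi-\delta\phi}=e^{|z|^{2}}(-\varphi)^{\delta}$, and since $i\partial\overline{\partial}\phi=\frac{i\partial\overline{\partial}\varphi}{-\varphi}+\frac{i\partial\varphi\wedge\overline{\partial}\varphi}{\varphi^{2}}$ with $(i\partial\varphi\wedge\overline{\partial}\varphi)^{2}=0$, one gets $\mathrm{supp}\,(i\partial\overline{\partial}\phi)^{n-2}\subset\mathrm{supp}\,(i\partial\overline{\partial}\varphi)^{n-3}$, hence $(i\partial\overline{\partial}\phi)^{m}=0$ on $\mathrm{supp}\,\alpha$ for all $m\geq n-2$. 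Expanding $\omega_{\varepsilon}^{n}=(\varepsilon\,i\partial\overline{\partial}\psi+i\partial\overline{\partial}\phi)^{n}$, every term surviving on $\mathrm{supp}\,\alpha$ carries a factor $\varepsilon^{k}$ with $k\geq 3$, and for $\varepsilon\leq 1$ one checks $\omega_{\varepsilon}^{n}\leq C\varepsilon^{3}(-\varphi)^{-(n-2)}(i\partial\overline{\partial}\psi)^{n}$ there; since $\delta-(n-2)>-1$ and $d\varphi\neq 0$ on $\partial\Omega$, the local coordinate argument of Lemma~\ref{lemma:5} gives $\int_{\Omega}(-\varphi)^{\delta-(n-2)}(i\partial\overline{\partial}\psi)^{n}<\infty$, whence $M_{\varepsilon}\leq C\varepsilon^{3}$. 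Because $\omega_{\varepsilon}^{n}\geq(i\partial\overline{\partial}\phi)^{n}$ and $e^{\psi-\delta\phi}\geq|r|^{\delta}$ on $\{\varphi<r\}$, this already yields $\int_{\{\varphi<r\}}|u_{\varepsilon}|^{2}(i\partial\overline{\partial}\phi)^{n}\leq n!\,|r|^{-\delta}M_{\varepsilon}\leq C|r|^{-\delta}\varepsilon^{3}$ for every $r<0$.

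The main obstacle is that $dV_{\omega_{\varepsilon}}$ degenerates as $\varepsilon\to 0$, so the above does not by itself control $u_{\varepsilon}$ in $L^{2}_{\mathrm{loc}}$ for the Euclidean volume, which is what one needs to pass to the limit. This is where Section~\ref{section:4} enters. Using H\"ormander's theorem on the pseudoconvex domain $\Omega$, I would fix once and for all a smooth $u_{0}$ with $\overline{\partial}u_{0}=\alpha$ and $\int_{\Omega}|u_{0}|^{2}dV<\infty$, and set $h_{\varepsilon}:=u_{\varepsilon}-u_{0}\in\mathcal{O}(\Omega)$, so that $|h_{\varepsilon}|^{2}$ is plurisubharmonic and nonnegative on $\Omega$. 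With $\mu:=(i\partial\overline{\partial}\varphi)^{n}+i\partial\varphi\wedge\overline{\partial}\varphi\wedge(i\partial\overline{\partial}\varphi)^{n-1}$, the identity $(i\partial\overline{\partial}\phi)^{n}=\frac{(i\partial\overline{\partial}\varphi)^{n}}{(-\varphi)^{n}}+n\frac{i\partial\varphi\wedge\overline{\partial}\varphi\wedge(i\partial\overline{\partial}\varphi)^{n-1}}{(-\varphi)^{n+1}}$ together with $\delta<n$ gives $\mu\leq C(-\varphi)^{\delta}(i\partial\overline{\partial}\phi)^{n}$, so $\int_{\Omega}|u_{\varepsilon}|^{2}\mu\leq C\int_{\Omega}|u_{\varepsilon}|^{2}(-\varphi)^{\delta}(i\partial\overline{\partial}\phi)^{n}\leq Cn!\,M_{\varepsilon}$ is bounded; moreover $\mu\leq C\,dV$, so $\int_{\Omega}|u_{0}|^{2}\mu<\infty$. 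Hence $\int_{\Omega}|h_{\varepsilon}|^{2}\mu$ is bounded uniformly in $\varepsilon$. Applying the second inequality of Theorem~\ref{theorem:2} with $v=|h_{\varepsilon}|^{2}$ then bounds $\int_{\{\varphi<r\}}|h_{\varepsilon}|^{2}(i\partial\overline{\partial}|z|^{2})^{n}$ uniformly in $\varepsilon$ for each $r<0$; since every compact subset of $\Omega$ lies in some $\{\varphi<r\}$, the holomorphic functions $h_{\varepsilon}$ form a normal family.

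Finally I would extract $\varepsilon_{j}\to 0$ with $h_{\varepsilon_{j}}\to h\in\mathcal{O}(\Omega)$ locally uniformly, put $u:=h+u_{0}$, so that $u\in C^{\infty}(\Omega)$, $\overline{\partial}u=\alpha$, and $u_{\varepsilon_{j}}\to u$ locally uniformly on $\Omega$. For each $r<0$ the set $\overline{\{\varphi<r\}}$ is compact in $\Omega$ (as $\varphi\to 0$ on $\partial\Omega$) and $(i\partial\overline{\partial}\phi)^{n}$ is a finite measure there, so $\int_{\{\varphi<r\}}|u|^{2}(i\partial\overline{\partial}\phi)^{n}=\lim_{j}\int_{\{\varphi<r\}}|u_{\varepsilon_{j}}|^{2}(i\partial\overline{\partial}\phi)^{n}\leq\lim_{j}C|r|^{-\delta}\varepsilon_{j}^{3}=0$; letting $r\uparrow 0$ and using monotone convergence ($\{\varphi<r\}\uparrow\Omega$) gives $\int_{\Omega}|u|^{2}(i\partial\overline{\partial}\phi)^{n}=0$, and smoothness of $u$ is automatic from $\overline{\partial}u=\alpha$. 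The points demanding care are the support inclusion $\mathrm{supp}\,(i\partial\overline{\partial}\phi)^{n-2}\subset\mathrm{supp}\,(i\partial\overline{\partial}\varphi)^{n-3}$, the exponent bookkeeping that makes $M_{\varepsilon}=O(\varepsilon^{3})$ and $\mu\leq C(-\varphi)^{\delta}(i\partial\overline{\partial}\phi)^{n}$ hold simultaneously under $n-3<\delta<n-2$, and checking that $|h_{\varepsilon}|^{2}$ genuinely satisfies the hypotheses of Theorem~\ref{theorem:2}.
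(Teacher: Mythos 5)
Your proposal is correct and follows essentially the same route as the paper: solve $\overline{\partial}u_{\varepsilon}=\alpha$ via Proposition~\ref{proposition:1} for the metrics $\omega_{\varepsilon}$, observe that the weighted norm of the data (hence of the solutions) tends to $0$ because $(i\partial\overline{\partial}\phi)^{n-2}$ vanishes near $\mathrm{supp}\,\alpha$, split off a fixed reference solution so that the differences are holomorphic, and use Theorem~\ref{theorem:2} to upgrade the degenerate bound to uniform Euclidean $L^{2}$ bounds on sublevel sets before passing to the limit. The only deviations are implementation details: you establish the quantitative rate $M_{\varepsilon}=O(\varepsilon^{3})$ where the paper simply invokes dominated convergence against $dV_{j}\downarrow dV_{i\partial\overline{\partial}\phi}$, you take the reference solution from H\"ormander's theorem and apply Theorem~\ref{theorem:2} globally on $\Omega$ rather than on each $\Omega(r)$, and you extract the limit by normality of the holomorphic family rather than by weak $L^{2}$ convergence plus a diagonal argument.
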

\begin{proof}
We use the same notation as Proposition~\ref{proposition:1}.  
Let $\varepsilon_{j}$ be a decreasing sequence of positive numbers which converge to $0$.  
We put $dV_{j} = (n!)^{-1}\omega_{\varepsilon_{j}}^{n}$.  
Then $dV_{j}$ decreases to $dV_{i \partial \overline{\partial}\phi} = (n!)^{-1}(i\partial \overline{\partial}\phi)^{n}$.  
By Proposition~\ref{proposition:1}, 
there exists a sequence $u_{j}$ of smooth functions such that $\overline{\partial}u_{j} = \alpha$ and 
\[
\int_{\Omega} |u_{j}|^{2} e^{\psi - \delta \phi}dV_{j} \leq C_{n, \delta} \int_{\Omega}|\alpha|^{2}_{i\partial \overline{\partial}\psi} e^{\psi - \delta \phi}dV_{j}.  
\]
We have that $\mathrm{supp}\, \alpha \subset \Omega \setminus \mathrm{supp}\, (i\partial \overline{\partial}\varphi)^{n-3} \subset \Omega \setminus \mathrm{supp}\, (i\partial \overline{\partial}\phi)^{n}$.  
Hence the right hand side of the above inequality goes to $0$ when $j \to \infty$ because of Lebesgue's dominated convergence theorem.  
Let $\Omega(r) = \{z \in \Omega \, |\, \varphi(z) < r\}$.  
Then $\int_{\Omega(r)}|u_{j}|^{2} dV_{i\partial \overline{\partial}\phi}$ goes to $0$ when $j \to \infty$ for $r<0$.  
We take $h \in C^{\infty}(\Omega)$ such that $\overline{\partial} h = \alpha$ (see \cite{Dem}, \cite{Hor}).  
Define $F_{j} = h - u_{j}$.  
Then $F_{j}$ is a holomorphic function and $\int_{\Omega(r)} |F_{j}|^{2} dV_{i \partial \overline{\partial}\phi}$ are bounded from above for all $j$.  
Since there exists a positive constant $C$ such that $i\partial \varphi \wedge \overline{\partial}\varphi \wedge (i \partial \overline{\partial} \varphi)^{n-1} + (i\partial \overline{\partial}\varphi)^{n} \leq C(i\partial \overline{\partial}\phi)^{n}$ on $\Omega(r)$, 
Theorem~\ref{theorem:2} shows that $\int_{\Omega(r')} |F_{j}|^{2} (i \partial \overline{\partial} |z|^{2})^{n}$ and $\int_{\Omega(r')} |u_{j}|^{2} (i \partial \overline{\partial} |z|^{2})^{n}$ are bounded from above for all $j$ when $r' < r$.  
We can thus find a weakly convergent subsequence $u_{j_{\nu}}$ in $L^{2}(\Omega(r'))$.  
Let $u$ be the weak limit $u$.  
It follows that $\overline{\partial}u = \alpha$ on $\Omega(r')$ and $\int_{\Omega(r')} |u|^{2} (i\partial \overline{\partial} \phi)^{n} = 0$.  
Then, by using a diagonal argument, we have the solution we are looking for.  
\end{proof}

\begin{proof}[Proof of Theorem~\ref{theorem:1}]
Let $r <0$ such that $|r|$ is sufficiently small and let $\Omega(r) = \{z \in \Omega \, |\, \varphi(z) < r\}$.  
We can choose $r$ such that $d \varphi \neq 0$ on $\partial \Omega(r)$.  
Let $V(r) = \Omega(r) \cap V$.  
There exists $\delta >0$ such that 
$d(\partial V(r) \setminus \partial \Omega(r), \mathrm{supp}\, (i \partial \overline{\partial}\varphi)^{n-3} \cap \Omega(r)) > 3\delta$.  
Here $d(A, B)$, $A, B \subset \mathbb{C}^{n}$ is the Euclidean distance between $A$ and $B$.  
Let $U_{j} = \{z \in \Omega(r)\, |\, d(z, \mathrm{supp}\, (i\partial \overline{\partial}\varphi)^{n-3} \cap \Omega(r)) < j \delta\}$ ($j = 1, 2$).  
We take a smooth function $\chi$ on $\Omega(r)$ such that $\chi = 1$ on $U_{1}$ and $\chi = 0$ on $\Omega(r) \setminus U_{2}$.  
Let $f$ be a holomorphic function on $V$.  
Define $\alpha = \overline{\partial} (\chi f)$.  
We may assume that $\alpha$ is defined on an open neighborhood of $\overline{\Omega(r)}$ by a small perturbation of $r$.  
Then $\mathrm{supp}\, \alpha \subset \overline{\Omega(r)} \setminus \mathrm{supp}\, (i \partial \overline{\partial}\varphi)^{n-3}$ in $\overline{\Omega(r)}$.  
By Theorem~\ref{theorem:3}, there exists $u \in C^{\infty}(\Omega(r))$ such that $\overline{\partial} u = \alpha$ and 
$\int_{\Omega(r)}|u|^{2} (i \partial \overline{\partial}(-(\log(r- \varphi)))^{n} = 0$.  
(If $\Omega(r)$ is a disjoint union of bounded hyperconvex domain, we apply Theorem~\ref{theorem:3} to each component.)  
Then $u = 0$ on $\mathrm{supp}\, (i \partial \overline{\partial} \phi)^{n} \cap \Omega(r)$ since $\mathrm{supp}\, (i \partial \overline{\partial} \phi)^{n} = \mathrm{supp}\, (i\partial \overline{\partial}(-\log(r - \varphi)))^{n}$.   
Let $F_{r} = \chi f - u$.  
Then $F_{r}$ is holomorphic on $\Omega(r)$ and $F_{r} = f$ on $\mathrm{supp}\, (i \partial \overline{\partial}\phi)^{n}\cap \Omega(r)$.  
We note that any component of $\Omega(r)$ intersects $\mathrm{supp}\, (i \partial \overline{\partial}\phi)^{n}$ by the comparison theorem (see \cite{Kli}).  
By letting $r \to 0$, we obtain the holomorphic function $F$ on $\Omega$ such that $F = f$ on $\mathrm{supp}\, (i \partial \overline{\partial}\phi)^{n}$ because of the identity theorem.  
Since $\mathrm{supp}\, (i\partial \overline{\partial}\phi)^{n} \subset V$ and $V$ is connected, we have $F = f$ on $V$.  
\end{proof}
\begin{proof}[Proof of Corollary~\ref{corollary:1}]
We use the same notation as the proof of Theorem~\ref{theorem:1}.  
Let $p \in \mathrm{supp}\, (i\partial \overline{\partial}\phi)^{n} \subset V$.  
Let $h:\mathbb{C}^{n} \to \mathbb{R}^{+}$ be a smooth function of $|z|$ whose support is the unit ball and whose integral is equal to one.  
Define $h_{\varepsilon} = (1/\varepsilon^{2n})h(z/\varepsilon)$ for $\varepsilon > 0$.  
Let $\varphi_{\varepsilon} = \varphi * h_{\varepsilon}$ be a function on $\Omega(r)$ where $r < 0$ and $0 < \varepsilon << |r|$.  
Let $\phi_{\varepsilon} = -(\log (-\varphi_{\varepsilon}))$ and let W be a connected open neighborhood of $p$ such that $W \subset V$.  
If $\varepsilon$ is sufficiently small,  
then $\mathrm{supp} (i\partial \overline{\partial}\varphi_{\varepsilon}) \subset V$ in $\Omega(r)$ and $\mathrm{supp}\, (i\partial \overline{\partial} \phi_{\varepsilon})^{n}\cap W \neq \emptyset$ by the continuity of the Monge-Amp\`ere measure (see \cite{Bed-Tay}, \cite{Kli}).  
Let $s < 0$ such that $W \subset \Omega(s)$.  
By taking $|r|$ and $\varepsilon$ are sufficiently small, 
we may assume that there exists $t<0$ such that $\Omega(s) \subset \Omega_{\varepsilon}(t):=\{z \in \Omega(r)\, |\, \varphi_{\varepsilon}(z) < t\}$ and 
$\overline{\Omega_{\varepsilon}(t)} \subset \Omega(r)$.  
Then there exists a holomorphic function $F_{t}$ on $\Omega_{\varepsilon}(t)$ such that $F_{t} = f$ on $\mathrm{supp}\, (i \partial \overline{\partial}\phi_{\varepsilon})^{n}$
by the same argument as the proof of Theorem~\ref{theorem:1}.  
Then $F_{t} = f$ on $W$ because $\mathrm{supp}\, (i\partial \overline{\partial}\phi_{\varepsilon})^{n}\cap W \neq \emptyset$.  
Let $\Omega(s)_{0}$ be a component of $\Omega(s)$ which contains $W$.  
It follows that $F_{t}|_{\Omega(s)_{0}}$ does not depend on $\varepsilon, r$ and $t$ by the identity theorem.  
By letting $s \to 0$, there exists the holomoprhic function $F$ on $\Omega$ such that $F = f$ on $W$.  
Since $V$ is connected, we have $F =f$ on $V$
\end{proof}
\begin{corollary}\label{corollary:2}
Let $n \geq 4$ and $\Omega$ be a pseudoconvex domain in $\mathbb{C}^{n}$.  
Let $\varphi$ be an exhaustive smooth plurisubharmonic function on $\Omega$.  
Let $V \subset \Omega$ be a connected open neighborhood of $\mathrm{supp}\, (i\partial \overline{\partial}\varphi)^{n-3}$.  
Then any holomorphic function on $V$ can be extended to the holomorphic function on $\Omega$.  
\end{corollary}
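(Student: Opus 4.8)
The plan is to reduce to the proof of Theorem~\ref{theorem:1} by exhausting $\Omega$ with sublevel sets of $\varphi$. For $c<\sup_{\Omega}\varphi$ set $\Omega_c=\{z\in\Omega:\varphi(z)<c\}$. Since $\varphi$ is a smooth plurisubharmonic exhaustion, each $\Omega_c$ is relatively compact in $\Omega$ (hence bounded in $\mathbb{C}^{n}$) and $\bigcup_c\Omega_c=\Omega$; by Sard's theorem I would fix an increasing sequence $c_1<c_2<\cdots\to\sup_{\Omega}\varphi$ with $d\varphi\neq 0$ on each $\partial\Omega_{c_j}$. Each connected component $D$ of $\Omega_{c_j}$ is then a bounded hyperconvex domain, $\varphi|_D-c_j$ being a negative smooth plurisubharmonic function on $D$ with $(\varphi-c_j)(z)\to 0$ as $z\to\partial D$; because $i\partial\overline\partial(\varphi-c_j)=i\partial\overline\partial\varphi$, the pair $(D,\,\varphi|_D-c_j)$ satisfies exactly the hypotheses imposed on $(\Omega,\varphi)$ in Theorem~\ref{theorem:1}, and $\mathrm{supp}\,(i\partial\overline\partial(\varphi-c_j))^{n-3}\cap D=\mathrm{supp}\,(i\partial\overline\partial\varphi)^{n-3}\cap D$.

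I would then run the proof of Theorem~\ref{theorem:1} verbatim on each $\Omega_{c_j}$, treating its components separately exactly as that proof does. Writing $\phi_j=-(\log(c_j-\varphi))$ on $\Omega_{c_j}$ and choosing a smooth cutoff $\chi_j$ equal to $1$ near $\mathrm{supp}\,(i\partial\overline\partial\varphi)^{n-3}\cap\overline{\Omega_{c_j}}$ and supported in $V$, one applies Theorem~\ref{theorem:3} to $\alpha_j:=\overline\partial(\chi_j f)$ and obtains a holomorphic function $F_j$ on $\Omega_{c_j}$ with $F_j=f$ on $\mathrm{supp}\,(i\partial\overline\partial\phi_j)^n$. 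Here one uses $n\geq 4$ (so $n-3\geq 1$) together with the inclusions $\mathrm{supp}\,(i\partial\overline\partial\phi_j)^n\subset\mathrm{supp}\,(i\partial\overline\partial\varphi)^{n-1}\subset\mathrm{supp}\,(i\partial\overline\partial\varphi)^{n-3}\subset V$, precisely those exploited in Proposition~\ref{proposition:1} and Theorem~\ref{theorem:3}; extending $\chi_j$ by zero over $\Omega$, the form $\alpha_j$ is a smooth $\overline\partial$-closed $(0,1)$-form on $\Omega$ whose support in $\overline{\Omega_{c_j}}$ is disjoint from $\mathrm{supp}\,(i\partial\overline\partial\varphi)^{n-3}$, so Theorem~\ref{theorem:3} indeed applies on each component of $\Omega_{c_j}$, as in the proof of Theorem~\ref{theorem:1}.

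It remains to glue the $F_j$. On $\Omega_{c_j}$ the measures $(i\partial\overline\partial(-\log(c_j-\varphi)))^n$ and $(i\partial\overline\partial(-\log(c_{j+1}-\varphi)))^n$ are mutually absolutely continuous — both equal $\dfrac{(i\partial\overline\partial\varphi)^n}{(c_k-\varphi)^n}+\dfrac{n\,(i\partial\overline\partial\varphi)^{n-1}\wedge i\partial\varphi\wedge\overline\partial\varphi}{(c_k-\varphi)^{n+1}}$ with strictly positive coefficients for $k=j,j+1$ — so they share the same support there, whence $F_{j+1}$ also equals $f$ on $\mathrm{supp}\,(i\partial\overline\partial\phi_j)^n$. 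Since $\varphi$ is smooth, this set contains a nonempty open subset of every component of $\Omega_{c_j}$ (the comparison theorem giving $(i\partial\overline\partial\phi_j)^n\not\equiv 0$ on each component, exactly as in the proof of Theorem~\ref{theorem:1}), so the identity theorem yields $F_{j+1}|_{\Omega_{c_j}}=F_j$. Hence the $F_j$ patch to a holomorphic $F$ on $\Omega=\bigcup_j\Omega_{c_j}$ which equals $f$ on a nonempty open subset of $V$; since $V$ is connected, $F=f$ on $V$. The one step that genuinely requires care is this coherence of $\{F_j\}$: it rests on the comparison theorem (each component of each $\Omega_{c_j}$ meeting the set on which $F_j$ is pinned to $f$) and on the stability of $\mathrm{supp}\,(i\partial\overline\partial(-\log(-\psi)))^n$ under replacing $\psi$ by an admissible translate, while everything else is the proof of Theorem~\ref{theorem:1} transplanted onto the relatively compact, possibly disconnected, pieces $\Omega_{c_j}$.
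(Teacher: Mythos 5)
Your reduction is exactly the paper's: the published proof of Corollary~\ref{corollary:2} consists of the two observations that each sublevel set $\Omega(c)=\{\varphi<c\}$ is a bounded hyperconvex domain and that $\mathrm{supp}\,(i\partial\overline{\partial}(-\log(c-\varphi)))^{n}$ does not depend on the admissible constant $c$, followed by ``run the proof of Theorem~\ref{theorem:1} again''; your write-up is a faithful (and more detailed) expansion of that, including the treatment of components and the use of Sard's theorem to arrange $d\varphi\neq 0$ on $\partial\Omega_{c_j}$.

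There is, however, one claim in your gluing step that is wrong as stated: you assert that, because $(i\partial\overline{\partial}\phi_{j})^{n}\not\equiv 0$ on each component of $\Omega_{c_{j}}$, its support ``contains a nonempty open subset of every component.'' Non-vanishing of a positive measure gives only a nonempty support, not one with interior; even for smooth plurisubharmonic $\varphi$ the Monge--Amp\`ere measure can be carried by a closed set with empty interior, so the identity theorem cannot be applied in this naive form. The step is repairable, and the repair is presumably what the paper intends by ``the identity theorem'': if $F_{j+1}-F_{j}\not\equiv 0$ on a component $D$ of $\Omega_{c_{j}}$, its zero set is a proper analytic subset of $D$, hence pluripolar, hence null for $(i\partial\overline{\partial}\phi_{j})^{n}$ (the Monge--Amp\`ere measure of a locally bounded plurisubharmonic function does not charge pluripolar sets, by Bedford--Taylor); but the restriction of $(i\partial\overline{\partial}\phi_{j})^{n}$ to $D$ is a nonzero measure concentrated on $\mathrm{supp}\,(i\partial\overline{\partial}\phi_{j})^{n}\cap D\subset\{F_{j+1}=F_{j}\}$, a contradiction. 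With that substitution your argument is complete and coincides with the paper's.
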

\begin{corollary}\label{corollary:3}
Let $n \geq 4$ and $\Omega$ be a pseudoconvex domain in $\mathbb{C}^{n}$.  
Let $\varphi$ be an exhaustive continuous plurisubharmonic function on $\Omega$.  
Let $V \subset \Omega$ be a connected open neighborhood of $\mathrm{supp}\, i\partial \overline{\partial}\varphi$.  
Then any holomorphic function on $V$ can be extended to the holomorphic function on $\Omega$.  
\end{corollary}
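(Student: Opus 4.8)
The plan is to deduce Corollary~\ref{corollary:3} from the bounded hyperconvex case established in Corollary~\ref{corollary:1}, by exhausting the pseudoconvex domain $\Omega$ by relatively compact hyperconvex sublevel sets of $\varphi$ and letting the level tend to $+\infty$; this is the same reduction by which Corollary~\ref{corollary:2} follows from Theorem~\ref{theorem:1}.

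First I would set up the exhaustion. For $c\in\mathbb{R}$ put $\Omega_{c}=\{z\in\Omega\mid\varphi(z)<c\}$. Since $\varphi$ is an exhaustion function, each $\Omega_{c}$ is relatively compact in $\Omega$, hence bounded, and $\partial\Omega_{c}\subset\{\varphi=c\}$ because $\varphi$ is continuous; therefore $\varphi-c$ is a negative continuous plurisubharmonic function on $\Omega_{c}$ tending to $0$ at $\partial\Omega_{c}$, so each connected component $D$ of $\Omega_{c}$ is a bounded hyperconvex domain, with associated function $\phi_{c}:=-\log(c-\varphi)$. I would record two facts: by the chain rule for the Monge--Amp\`ere operator, the closed set $\mathrm{supp}\,(i\partial\overline{\partial}\phi_{c})^{n}$ does not depend on $c$ and is contained in $\mathrm{supp}\,i\partial\overline{\partial}\varphi\subset V$; and by the comparison theorem (\cite{Kli}), $\int_{D}(i\partial\overline{\partial}\phi_{c})^{n}>0$ for every such component $D$, so $\mathrm{supp}\,(i\partial\overline{\partial}\phi_{c})^{n}\cap D\neq\emptyset$.

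Now fix a holomorphic function $f$ on $V$. Fix a level $c_{0}$ with $\Omega_{c_{0}}\neq\emptyset$, a point $p\in\mathrm{supp}\,(i\partial\overline{\partial}\phi_{c_{0}})^{n}$, the component $D_{0}$ of $\Omega_{c_{0}}$ containing $p$, and a connected open set $W$ with $p\in W\subset V\cap D_{0}$. For $c\ge c_{0}$ let $D_{c}$ be the component of $\Omega_{c}$ containing $D_{0}$; the $D_{c}$ increase with $c$ and, since $\Omega$ is connected, $\bigcup_{c\ge c_{0}}D_{c}=\Omega$. On each bounded hyperconvex domain $D_{c}$ I would run the argument in the proof of Corollary~\ref{corollary:1} with the negative continuous plurisubharmonic function $\varphi-c$ and the datum $f|_{V\cap D_{c}}$, using the fixed point $p$ (which lies in $\mathrm{supp}\,(i\partial\overline{\partial}\phi_{c})^{n}$ for every $c$, by the $c$-independence above) and the fixed neighbourhood $W$: this produces a holomorphic function $F_{c}$ on $D_{c}$ with $F_{c}=f$ on $W$.

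Finally I would patch: for $c_{0}\le c\le c'$ the holomorphic functions $F_{c}$ and $F_{c'}$ agree with $f$ on the nonempty open set $W\subset D_{c}$, hence coincide on the connected set $D_{c}$ by the identity theorem, so $F:=\lim_{c\to+\infty}F_{c}$ is a well-defined holomorphic function on $\Omega=\bigcup_{c\ge c_{0}}D_{c}$ with $F=f$ on $W$, and therefore $F=f$ on all of $V$ since $V$ is connected. The step I expect to be the main obstacle is precisely this patching — ensuring that the extensions built over the various sublevel sets are mutually compatible — and the device that makes it go through, already used at the ends of the proofs of Theorem~\ref{theorem:1} and Corollary~\ref{corollary:1}, is to pin all of them to a single fixed neighbourhood $W$ on which, thanks to the comparison theorem and the $c$-independence of the support of the degenerate Monge--Amp\`ere measure, the value $f$ is forced. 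Everything else is a routine transcription of the bounded hyperconvex case.
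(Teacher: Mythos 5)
Your proposal is correct and follows essentially the same route as the paper: the paper's (very brief) proof likewise exhausts $\Omega$ by the bounded hyperconvex sublevel sets $\Omega(c)$, notes that $\mathrm{supp}\,(i\partial\overline{\partial}(-\log(c-\varphi)))^{n}$ is independent of $c$, and then reruns the arguments of Theorem~\ref{theorem:1} and Corollary~\ref{corollary:1}, patching the resulting extensions via the identity theorem exactly as you describe. Your write-up merely makes explicit the component/patching details that the paper leaves implicit.
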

\begin{proof}[Proof of Corollary~\ref{corollary:2} and \ref{corollary:3}]
Let $r \in \mathbb{R}$.  
Then $\Omega(r) = \{z \in \Omega \, |\, \varphi(z) < r\}$ is a bounded hyperconvex domain and $\mathrm{supp}\, (i \partial \overline{\partial} (-\log(-\varphi)))^{n} = \mathrm{supp}\, (i \partial \overline{\partial} (-\log(r-\varphi)))^{n}$.  
The corollaries follow from the same arguments as the proofs of Theorem~\ref{theorem:1} and Corollary~\ref{corollary:1}.  
\end{proof}
Let $\Omega$ be a pseudoconvex domain in $\mathbb{C}^{n}$ ($n \geq 4$) 
and let $\varphi$ be an exhaustive continuous plurisubharmonic function on $\Omega$.  
Let $\Omega(r) = \{z \in \Omega \, |\, \varphi(z) < r\}$.  
Then $\max\{\varphi, r\}$ is an exhaustive continuous plurisubharmonic function which is pluriharmonic on $\Omega(r)$.  
Hence any holomorphic function on a connected open neighborhood of $\Omega \setminus \Omega(r)$ can be extended to the holomorphic function on $\Omega$.  
This is a special case of Hartogs extension theorem.  

\par\noindent{\scshape \small
Department of Mathematics, \\
Ochanomizu University,  \\
2-1-1 Otsuka, Bunkyo-ku, Tokyo (Japan) }
\par\noindent{\ttfamily chiba.yusaku@ocha.ac.jp}
\end{document}